\documentclass[a4paper,11pt]{article}
\usepackage[utf8]{inputenc}
\usepackage[english]{babel}

\usepackage{amsmath}
\usepackage{amsthm}
\usepackage{amssymb}
\usepackage{bm}
\usepackage{tikz}
\usepackage{caption}
\usepackage{subcaption}
\usepackage{rotating}

\usepackage{stmaryrd}

\newtheorem{theorem}{Theorem}[section]
\newtheorem{lemma}[theorem]{Lemma}
\theoremstyle{definition}

\newtheorem{algorithm}[theorem]{Algorithm}
\theoremstyle{remark}

\tikzstyle{vertex}=[minimum size=3pt,fill,draw, circle,inner sep=0pt]
\tikzstyle{vertexsq}=[minimum size=3pt,fill,draw, rectangle,inner sep=0pt]

\DeclareMathOperator{\Lapl}{\triangle}
\DeclareMathOperator{\Ker}{ker}

\newcommand{\ud}{\mathrm{d}}

\title{Cross-Points in Domain Decomposition Methods with a Finite
  Element Discretization} \author{Martin J. Gander\thanks{Université
    de Genève, Section de Mathématiques, \texttt{Martin.Gander@unige.ch} }, Kévin
  Santugini\thanks{Institut Polytechnique de Bordeaux, Institut Mathématiques de
Bordeaux, CNRS UMR5251, \texttt{Kevin.Santugini-Repiquet@ipb.fr}}}
\date{\today}

\begin{document}
\maketitle

\begin{abstract}
Non-overlapping domain decomposition methods necessarily have to
exchange Dirichlet and Neumann traces at interfaces in order to be
able to converge to the underlying mono-domain solution. Well known
such non-overlapping methods are the Dirichlet-Neumann method, the
FETI and Neumann-Neumann methods, and optimized Schwarz methods. For
all these methods, cross-points in the domain decomposition
configuration where more than two subdomains meet do not pose any
problem at the continuous level, but care must be taken when the
methods are discretized. We show in this paper two possible approaches
for the consistent discretization of Neumann conditions at
cross-points in a Finite Element setting.
\end{abstract}

\section{Introduction}

Domain decomposition methods~(DDMs) are among the best parallel solvers for
elliptic partial differential equations, see the books
\cite{Smith:1996:DPM,Quarteroni:1999:DDM, Toselli:2004:DDM} and
references therein. While classical Schwarz methods only exchange
Dirichlet information from subdomain to subdomain, and converge
because of overlap, non-overlapping methods like Dirichlet-Neumann,
FETI, Neumann-Neumann and optimized Schwarz methods~(OSMs) also exchange
Neumann traces, or combinations of Dirichlet and Neumann traces
between subdomains. In a general decomposition of a domain
$\Omega\subset\mathbb{R}^2$ into non-overlapping subdomains
$(\Omega_i)_{1\leq i\leq I}$, naturally cross-points arise. Such
cross-points, where more than two subdomains meet, do not pose any
problem in a continuous variational setting, but as soon as one
introduces a finite dimensional approximation, the discretization of a
Neumann condition over a cross-point does not follow naturally. The
earliest paper dedicated to cross-points dates, to our knowledge, back
to 1986: in
\cite{Dryja.Proskurowsky.Widlund:1986:MethodDomainDecompositionCrosspoints},
a Dirichlet-Neumann method is presented for domain decompositions with
cartesian topology that can be colored with only two colors.  Boundary
points, including cross-points, are part of the Neumann subdomains,
and all Neumann subdomains are coupled at cross-points, while
Dirichlet subdomains are fully decoupled.
In~\cite{Bialecki.Dryja:2008:NonoverlappingDDCrosspointsCollocation},
a Krylov accelerated DDM to compute the collocation solution of the
Poisson equation in a square with Hermite finite elements is studied.
There are four subdomains in a $2\times2$ grid configuration, thus
involving a cross-point, and theoretical convergence estimates are
provided. The FETI-DP
algorithm~\cite{Farhat:2001:DPU,Klawonn:2002:DPU} modifies the FETI
algorithm~\cite{Mandel:1996:BDP} at cross-points by replacing the dual
variables by primal ones and thus avoiding the problem of Neumann
conditions there. Similarly, strong coupling at cross-points is also
proposed
in~\cite{Bendali.Boubendir:2006:NonOverlappingDDM,Boubendir.Bendali.Fares:2008:CouplingNonOverlappingDDM}
for nodal finite elements.
In~\cite{Gander.Kwok:2012:BestRobinParameters}, it was shown for
optimized Schwarz methods (OSMs) in an algebraic setting that
optimized Robin parameters scale differently at cross-points, namely
like $O(1/h)$, in contrast to $O(1/\sqrt{h})$ at interface points
which are not cross-points, see also~\cite{loisel2013condition} for
condition number estimates in the presence of cross-points.  Cross
points can also be handled in the context of mortar methods, and in
very special symmetric configurations, it is actually possible for
cross-points not to pose any problems, 
see~\cite{Gander.Kwok:ApplicabilityLionsEnergyEstimates}. The cross-point problem can be avoided
entirely when using cell-centered finite volume discretizations,
because they do not contain cross-points at the discrete 
level, see~\cite{Cautres.Herbin.Hubert:2004:LionsDomainDecompositionMethod}
for the convergence of the cell-centered finite volume Optimized
Schwarz method with Robin transmission
conditions; see~\cite{Halpern.Hubert:FiniteVolumeVentcellSchwarz} for the convergence of the
cell-centered finite volume Optimized Schwarz with Ventcell
transmission conditions in the absence of cross-points; 
and~\cite{Gander.Kwok.Santugini:EnergyProofDiscreteOSMCellCenteredFV} for
the extension of the convergence proof to symmetric positive
definite transmission operators even in the presence of cross-points.

We describe in this paper in detail two approaches to exchange Neumann
traces over cross points in a finite element setting for two
dimensional problems: the {\em auxiliary variable method}, and {\em
  complete communication}. The auxiliary variable method keeps in
addition to the primal unknowns also auxiliary unknowns representing
interface data in each subdomain. These auxiliary variables permit a
consistent discretization of the Neumann traces at cross points while
only communicating with neighboring domains sharing a boundary of
non-zero one-dimensional measure. As a first main result, we show that
with auxiliary variables, one can prove convergence of the discretized
domain decomposition algorithm using energy estimates, which is not
possible for finite element discretizations with cross-points
otherwise~\cite{Gander.Kwok:ApplicabilityLionsEnergyEstimates}. A disadvantage of the
auxiliary variables is that they are not necessarily converging to a
limit, but this does not affect the convergence of the primal unknowns
in the iteration. The complete communication method needs to exchange
information with all subdomains touching at cross points, also those
which touch only at a point, in order to have a consistent
discretization of Neumann conditions. Our second main result is to
show how to determine among the many possible splittings of Neumann
traces one that minimizes oscillation.

Our paper is organized as follows: in~\S\ref{sect:DiscreteOSM}, we
describe on the concrete example of an OSM why the discretization of
the Neumann part of the transmission condition is ambiguous at
cross-points. In~\S\ref{sect:AuxiliaryVariables}, we present the first
approach on how to transmit Neumann information near cross-points
using auxiliary variables, and give a general convergence proof for a
non-overlapping OSM discretized by finite elements with
cross-points. In~\S\ref{sect:CompleteCommunications}, we describe how
Neumann information can be transmitted near cross-points by
communicating among all subdomains sharing the cross point, and we
propose a specific method minimizing oscillation.  After our
conclusions in~\S\ref{sect:Conclusions}, we show in Appendix
\ref{sect:Lumping} that instead of using higher order, so called
Ventcell transmission conditions, see for
example~\cite{Japhet:1997:CAL, Japhet:1998:OKV, Chevalier:1998:SMO,
  Japhet:2000:BIC, Japhet:2000:OO2, Gander:2000:OSM, Gander:2006:OSM},
one can algebraically naturally obtain such conditions from Robin
conditions using mass lumping techniques in a finite element setting.
This avoids the need for discretizing higher order differential
operators in the tangential direction, and even works at cross-points,
which is our third important result. 

\section{The discrete Optimized Schwarz
  Method}\label{sect:DiscreteOSM}

For the elliptic problem $\mathcal{L}u=f$ in $\Omega$, and a
non-overlapping decomposition $(\Omega_i)_{1\leq i\leq I}$, the OSM
with Robin transmission conditions at the continuous level is (see for
example~\cite{Gander:2006:OSM})
\begin{algorithm}[OSM]\label{algo:ContinuousOSM}\quad\\\quad\vspace{-3ex}
\begin{enumerate}
\item Set $p>0$.
\item Start with an initial guess $u_i^0$ in each subdomain $\Omega_i$.
\item Until convergence, compute in parallel the unique solution
  $u_i^{n+1}$ to
\begin{eqnarray}\label{solve}
\mathcal{L}u_i^{n+1}&=&f\;\text{in $\Omega_i$},\\\label{TransmissionCond}
\frac{\partial u^{n+1}_i}{\partial\bm{n}_{ii'}}+pu^{n+1}_i&=&\frac{\partial u^{n}_{i'}}{\partial\bm{n}_{ii'}}+pu^{n}_{i'}\;\text{on $\partial\Omega_i\cap\partial\Omega_{i'}$}.
\end{eqnarray}
\end{enumerate}
\end{algorithm}
\noindent In a variational formulation of Algorithm
\ref{algo:ContinuousOSM}, 
cross-points do not pose any problems, since
they have measure zero.  In a finite dimensional approximation
however, using for example finite elements, the Neumann part of the
Robin transmission conditions is only known as a variational quantity,
as an integral over the edges connected to the cross-point. When
discretizing OSM (or any DDM), there are two guiding principles:
\begin{enumerate}
\item The discrete mono-domain solution should be a fixed point of the
  discrete OSM.
\item The discrete OSM should have a unique fixed point.
\end{enumerate}
We show in this section that it is not completely straightforward to
follow these two principles when cross-points are present.

\subsection{Geometric setting and notation}\label{subsect:OSM-FEMSetting}

Let $\mathcal{T}$ be a polygonal mesh of $\Omega\subset\mathbb{R}^2$. Let
$(\Omega_i)_{1\leq i\leq I}$ be a non-overlapping domain decomposition
of the domain $\Omega$. We assume that the subdomains $\Omega_i$ are
polygonal, and that each cell of $\mathcal{T}$ is included in
exactly one subdomain. Let $\mathcal{T}_i$ be the restriction of the
mesh $\mathcal{T}$ to $\Omega_i$, and denote by $\bm{x}_j$ the vertices
of the mesh $\mathcal{T}$. We consider a finite element space
$\mathcal{P}(\mathcal{T})$ subset of $H_0^1(\Omega)$ with the
following properties:
\begin{enumerate}
\item There is exactly one degree of freedom at each vertex of
  $\mathcal{T}$ for $\mathcal{P}(\mathcal{T})$.
\item For any edge $\lbrack\bm{x}_j\bm{x}_{j'}\rbrack$ of
  $\mathcal{P}(\mathcal{T})$ and for any $u$ in
  $\mathcal{P}(\mathcal{T})$, $u(\bm{x}_j)=0$ and
  $u(\bm{x}_{j'})=0$ implies $u$ vanishes on the entire edge
  $\lbrack\bm{x}_j\bm{x}_{j'}\rbrack$.
\end{enumerate}
Both these conditions are satisfied for $P_1$ elements on triangular
meshes and $Q_1$ elements on cartesian ones. We define
$\mathcal{P}(\mathcal{T}_i):=\{u_{\vert\Omega_i}\vert u\in
\mathcal{P}(\mathcal{T})\}$. We denote the hat functions by $\phi_j$,
i.e. the unique function in $\mathcal{P}(\mathcal{T})$ such that
\begin{equation*}
  \phi_j(\bm{x}_{j'})=\begin{cases}
    1&\text{if $j=j'$,}\\
    0&\text{if $j\neq j'$,}
  \end{cases}
\end{equation*}
and by $\phi_{i;j}$ we denote $(\phi_j)_{\vert\Omega_i}$. We will
systematically use for subdomain indices the letter $i$, and separate
it from nodal indices $j$ using a semicolon. The discretized OSM
operates then on the space
\begin{equation*}
  V:=\bigotimes_{i=1}^N\mathcal{P}(\mathcal{T}_i).
\end{equation*}
Since a node located on a subdomain boundary may belong to more than
one subdomain, we use the index $i$ in $\bm{x}_{i;j}$ to distinguish
degrees of freedom located at the same node but belonging to different
subdomains.

\subsection{Discretization of Robin transmission conditions}
  \label{subsect:DiscretizationRobin}

The discrete Neumann boundary condition must be computed variationally
in a FEM setting, see for example~\cite[p.3,
  Eq. (1.7)]{Toselli:2004:DDM}. Near cross-points, the Neumann
boundary condition is like an integral over both edges that are
adjacent to the cross-point and belonging to the boundary of the
subdomain. As there is no canonical way to split that variational
Neumann boundary condition, it is not clear how we should split that
quantity when it comes to transmitting Neumann information
between adjacent subdomains near cross points. Any splitting
should satisfy the two guiding principles listed at the 
beginning of~\S\ref{sect:DiscreteOSM}.

To investigate this problem, it suffices to study the case of the
elliptic operator $\mathcal{L}:=\eta-\Lapl$, $\eta>0$ in Algorithm
\ref{algo:ContinuousOSM}. Following finite element principles, we
should solve for every subdomain $\Omega_i$ at every new iteration
$n+1$
\begin{equation}\label{FEMOSM}
\eta\int_{\Omega_i}u_i^{n+1}\phi_{i;j}
+\int_{\Omega_i}\nabla u_i^{n+1}\cdot\nabla\phi_{i;j}
+p\int_{\partial\Omega_i}u_i^{n+1}\phi_{i;j}\ud\sigma(\bm{x})=f_{i;j}+g_{i;j}^{n+1}
\end{equation}
for all $j$ such that $\bm{x}_{i;j}$ is a node of mesh $\mathcal{T}$
located in $\overline{\Omega}_i$, in order to
find the new finite element subdomain solution approximation
$u_i^{n+1}=\sum_ju_{i;j}^{n+1}\phi_{i;j}$.  The data $g_{i;j}^{n+1}$
needs to be gathered from neighboring subdomains, satisfying~\eqref{TransmissionCond} variationally.  We denote by the
matrix $\mathbf{A}_i$ the sum of the mass and stiffness contributions
corresponding to the interior equation $\eta-\Lapl$ in each subdomain
$\Omega_i$,
\begin{equation}\label{eq:InteriorMatrixA}
  A_{i;j,j'}:=\eta
  \int_{\Omega_i}\phi_{i;j}(\bm{x})\phi_{i;j'}(\bm{x})\ud\bm{x}
+\int_{\Omega_i}\nabla\phi_{i;j} (\bm{x})
    \nabla\phi_{i;j'}(\bm{x})\ud\bm{x}.
\end{equation}
The matrix $\mathbf{B}_i^{\textrm{cons}}$ contains the boundary contribution
$p\int_{\partial\Omega_i}u_i^{n+1}\phi_{i;j}\ud\sigma(\bm{x})$,
including the Robin parameter $p$: if the finite elements are linear
on each edge, which holds for $Q_1$ and $P_1$ elements, we have the
consistent interface mass matrix
\begin{equation}\label{eq:UnlumpedBoundaryMatrix}
B^{\textrm{cons}}_{i;j,j'}:=
\begin{cases}
  \frac{p}{3}\sum_{j''}\lvert\bm{x}_{i;j}-\bm{x}_{i;j''}\rvert
    &\text{if $j'=j$ and $\bm{x}_{i;j}$ lies on $\partial\Omega_i$},\\
  \frac{p}{6}\lvert\bm{x}_{i;j}-\bm{x}_{i;j'}\rvert
    &\text{if $\lbrack\bm{x}_{i;j}\bm{x}_{i;j'}\rbrack$ is an 
     edge of $\partial\Omega_i$},\\
  0&\text{otherwise},
\end{cases}
\end{equation}
where the sum is taken over all $j''\neq j$ such that
$\lbrack\bm{x}_j\bm{x}_{j''}\rbrack$ is a boundary edge of $\mathcal{T}_i$.
A lumped version of the interface mass matrix $\mathbf{B}_i^{\textrm{cons}}$ is
\begin{equation}\label{eq:LumpedBoundaryMatrix}
   B^{\textrm{lump}}_{i;j,j'}:=
  \begin{cases}
\frac{p}{2}\sum_{j''}\lvert\bm{x}_{i;j}-\bm{x}_{i;j''}\rvert&\text{if
  $j=j'$ and $\bm{x}_{i;j}$ lies on $\partial\Omega_i$},\\
0&\text{otherwise},
\end{cases}
\end{equation}
where again the sum is taken over all $j''\neq j$ such that
$\lbrack\bm{x}_j\bm{x}_{j''}\rbrack$ is a boundary edge of $\mathcal{T}_i$.
We explain in Appendix \ref{sect:Lumping} why using a lumped interface
mass matrix $\mathbf{B}^{\textrm{lump}}_i$ leads to faster convergence than using a
consistent mass matrix $\mathbf{B}_i$, by interpreting the lumping process at
the continuous level as introducing a higher order term in the
transmission condition, see also~\cite{dolean2011can}. This higher
order term can even be optimized using a new concept of overlumping we
will introduce. Note that in the context of discrete duality finite volume
methods, it was shown
in~\cite{Gander.Hubert.Krell:2013:OptimizedSchwarzDDFV} that the
consistent mass matrix can even completely destroy the asymptotic
performance of the optimized Schwarz method, even without cross-points.
This is however not the case for the finite element discretizations we
consider here.

Using the matrix notation we introduced, we have to solve at each
Schwarz iteration the to (\ref{FEMOSM}) equivalent matrix problem
\begin{equation}\label{eq:OSMStepSystem}
  (\mathbf{A}_i+\mathbf{B}^{\textrm{lump}}_i)\bm{u}_i^{n+1}=\bm{f}_{i} +\bm{g}^{n+1}_i,
\end{equation}
where the vector $\bm{g}^{n+1}_i$ is zero at interior nodes of
$\Omega_i$ and contains the values $\bm{g}_{i,i'}^n$ transmitted from
the neighboring subdomains $\Omega_{i'}$ on the interface nodes of
$\Omega_i$. The computation of $\bm{f}_{i}$ and $\bm{g}^{n+1}_i$
should be done in such a way that the two guiding principles listed at
the beginning of \S\ref{sect:DiscreteOSM} are satisfied. At the
continuous level, $\bm{f}_i$ would just be the restriction of $f$ to
$\Omega_i$, and hence, if the continuous function $f$ is known, one
can set
\begin{equation*}
  f_{i;j}:=\int_{\Omega_i}f(\bm{x})\phi_{i;j}\ud\bm{x}.
\end{equation*}
If only $\bm{f}$ is known, then one has to choose $\bm{f}_{i}$ in such
a way that the $j$th component of $\bm{f}$ satisfies
$f_{j}=\sum_if_{i;j}$ where the sum happens over all indices $i$ such
that $\bm{x}_j$ belongs to $\overline{\Omega}_i$.  For the transmitted
values $\bm{g}^{n}_{i,i'}$ with a finite element discretization, the
Neumann contribution is defined by a variational problem. At the
continuous level, if $(\eta-\Lapl)u_i=f$ inside $\Omega_i$, we have by
Green's formula
\begin{equation}
\int_{\partial\Omega_i}\frac{\partial u_i}{\partial\bm{n}_i}v=
\eta\int_{\Omega_i}uv
+\int_{\Omega_i}\nabla u\nabla v
-\int_{\Omega_i}fv.
\end{equation}
This formula can be used to define discrete Neumann boundary conditions:
for $\bm{x}_{i;j}$ a vertex of the fine mesh located on
$\partial\Omega_i$, we define
\begin{equation}\label{eq:NeumannDiscrete}
\mathcal{N}_{i;j}(u_i):=
\eta\int_{\Omega_i}u_i\phi_{i;j}
+\int_{\Omega_i}\nabla u_i\nabla \phi_{i;j}
-\bm{f}_{i;j}.
\end{equation}
At the discrete level, the no Neumann jump condition satisfied by
the discrete mono-domain solution is given by
$\sum_{i}\mathcal{N}_{i;j}(u_i)=0$ where the sum is over all $i$
such that $\bm{x}_j$ is a boundary vertex of $\mathcal{T}_i$.  For interface points that
belong to exactly two subdomains $\overline{\Omega}_i$ and
$\overline{\Omega}_{i'}$, the Robin update is not ambiguous and we set
\begin{equation}\label{eq:RobinUpdate}
  g^{n}_{i,i';j}:=-\mathcal{N}_{i';j}(u^{n}_{i'})
    +\frac{p}{2}u_{i';j}^{n}\sum_{j'}\lvert\bm{x}_{i;j}-\bm{x}_{i;j'}\rvert,
\end{equation}
where the sum is over all $j'$ such that
  $\lbrack\bm{x}_{j}\bm{x}_{j'}\rbrack$ is a boundary edge of both $\mathcal{T}_i$
  and $\mathcal{T}_{i'}$.
The $g^{n}_{i,i';j}$ must be sent by subdomain $\Omega_{i'}$ to subdomain
$\Omega_{i}$, and then $g^{n+1}_{i;j}=g^{n}_{i,i';j}$, since there is
only one contribution from the unique neighbor $\Omega_{i'}$.

\subsection{Ambiguity of the Robin update at cross-points}\label{subsect:CrossPointsAmbiguity}

To see why the Robin update~\eqref{eq:RobinUpdate} can not be used 
at cross points, consider as an example the cross point $\bm{x}_1$
belonging to subdomain $\Omega_1$ shown in Figure~\ref{fig:CrossPoint}.
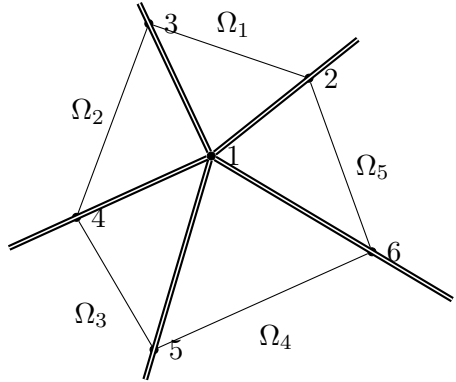
\begin{figure}
\centering
\begin{tikzpicture}
\node[vertex,label=right:$1$] (O) at (0,0.5) {} ;
\node[vertex, label=right:$2$] (A) at (50:2) {};
\node[vertex, label=right:$3$] (B) at (110:2.4) {};
\node[vertex, label=right:$4$] (C) at (190:1.8) {};
\node[vertex, label=right:$5$] (D) at (250:2.2) {};
\node[vertex, label=right:$6$] (E) at (340:2.25) {};
\draw[double,thick] (O) -- (barycentric cs:O=-0.25,A=0.75);
\draw[double,thick] (O) -- (barycentric cs:O=-0.1,B=0.75);
\draw[double,thick] (O) -- (barycentric cs:O=-0.25,C=0.75);
\draw[double,thick] (O) -- (barycentric cs:O=-0.1,D=0.75);
\draw[double,thick] (O) -- (barycentric cs:O=-0.25,E=0.75);

\node at (barycentric cs:O=-0.2,A=0.5,B=0.5) {$\Omega_1$};
\node at (barycentric cs:O=-0.2,B=0.5,C=0.5) {$\Omega_2$};
\node at (barycentric cs:O=-0.2,C=0.5,D=0.5) {$\Omega_3$};
\node at (barycentric cs:O=-0.2,D=0.5,E=0.5) {$\Omega_4$};
\node at (barycentric cs:O=-0.2,E=0.5,A=0.5) {$\Omega_5$};

\draw (A) -- (B) -- (C) -- (D) -- (E) -- (A) ;
\end{tikzpicture}
\caption{Example of a cross point in the decomposition}
\label{fig:CrossPoint}
\end{figure}
Following~\eqref{eq:RobinUpdate}, to compute $\bm{g}^{n+1}_{1}$ at
cross-point $\bm{x}_1$, one would intuitively set
\begin{equation*}
\begin{split}
  g^{n+1}_{1;1}&=-\mathcal{N}_{2;13} (u^{n}_2)
    +\frac{p}{2}\lvert\bm{x}_1-\bm{x}_3\rvert u_{2;1}^{n}\\
    &\phantom{=}-\mathcal{N}_{5;12}
    (u^{n}_5)+\frac{p}{2}\lvert\bm{x}_1-\bm{x}_2\rvert u_{5;1}^{n},
\end{split}
\end{equation*}
where $\mathcal{N}_{2;13}$ is the part of $\mathcal{N}_{2}$
located on edge $\lbrack\bm{x}_1\bm{x}_3\rbrack$, and likewise for
$\mathcal{N}_{5;12}$. Unfortunately, at the discrete level, the
Neumann contributions of $u^{n}_2$ and $u^{n}_5$ at $\bm{x}_{1}$ are only
known as an integral over the edges coming from $\bm{x}_{1}$. We
cannot distinguish the contribution of each edge to the Neumann
conditions $\mathcal{N}_{2} (u^{n}_2)$ and
$\mathcal{N}_{5} (u^{n}_5)$. We only know that
\begin{equation*}
  \mathcal{N}_{2} (u^{n}_2)=\mathcal{N}_{2;13} (u^{n}_2)
  +\mathcal{N}_{2;14} (u^{n}_2),\quad
  \mathcal{N}_{5} (u^{n}_5)=\mathcal{N}_{5;12} (u^{n}_5)
  +\mathcal{N}_{5;16} (u^{n}_5).
\end{equation*}
When transmitting the Robin condition at a cross point, the Neumann
contribution must be split across each edge in such a way that the
discrete mono-domain solution remains a fixed point of the optimized
Schwarz method, see principle 1 at the beginning of
\S\ref{sect:DiscreteOSM}. The discrete mono-domain solution satisfies
\begin{equation}\label{eq:MonodomainDiscreteSolutionsProperty}
u_{i;j}=u_{i';j}\quad \text{for all $i'$ with
  $\bm{x}_{j}$ in $\Omega_{i'}$, and}\quad
    \sum_{i,\bm{x}_j\in\partial\Omega_i}\mathcal{N}_{i;j} (u_i)=0.
\end{equation}
We should therefore split the Neumann contributions in such a way that
if properties~\eqref{eq:MonodomainDiscreteSolutionsProperty} are
satisfied for an iterate $u_i^{n}$, then the transmission conditions
do not change any more, $g_{i;j}^{n+1}=g_{i;j}^{n}$.  We show in the
next two sections that such a splitting can either be obtained using
auxiliary variables and communicating only with neighbors, or by
communicating with all subdomains that share the cross-point.

\section{Auxiliary variables at
  cross-points}\label{sect:AuxiliaryVariables}
We now show how to introduce auxiliary variables near the cross
points.  At the continuous level, we have on the interface between
subdomain $\Omega_i$ and $\Omega_{i'}$ from (\ref{TransmissionCond})
the identity
\begin{equation*}
  g^{n+1}_{i}=\frac{\partial u^{n+1}_i}{\partial\bm{n}_{ii'}}+pu^{n+1}_i
    =\frac{\partial u^{n}_{i'}}{\partial\bm{n}_{ii'}}+pu^{n}_{i'}
    =-\frac{\partial u^{n}_{i'}}{\partial\bm{n}_{i'i}}+pu^{n}_{i'}
    =-g^{n}_{i'}+2p u^n_{i'},
\end{equation*}
since by definition $g^{n}_{i'}=\frac{\partial
  u^{n}_{i'}}{\partial\bm{n}_{i'i}}+pu^{n}_{i'}$ and the normals are
in opposite directions. At the discrete level, the same equality can
be used to update the Robin transmission conditions,
\begin{equation}\label{eq:AlternateRobinUpdate}
  g^{n+1}_{i;j}=-g^{n}_{i';j}+2\frac{p}{2}u_{i';j}^{n}\sum_{j'}
    \lvert\bm{x}_{i;j}-\bm{x}_{i;j'}\rvert,
\end{equation}
where the sum is over all $j'$ such that
  $\lbrack\bm{x}_j\bm{x}_j'\rbrack$ is a boundary edge of $\mathcal{T}_i$.
This is very useful in practice, because one then does not even need
to implement a normal derivative evaluation~\cite{Gander:2001:OSH}.
At interface points which are not cross-points, this update will give
the same update as applying formula~\eqref{eq:RobinUpdate} using the
definition~\eqref{eq:NeumannDiscrete}. Therefore, if we are given the
values $g_{i,i';j}^n$ which represent the Robin transmission
information sent from subdomain $i'$ to subdomain $i$, we can compute
$u^{n+1}_i$ by setting 
\begin{equation}\label{eq:sumgiiprime}
g_{i;j}^{n+1}=\sum_{i'}g_{i,i';j}^{n}
\end{equation} 
and solving Eq~\eqref{eq:OSMStepSystem}. The sum in~\eqref{eq:sumgiiprime} above
is over all $i'$ such that there exists an edge originating from the vertex
$\bm{x}_j$ that belongs to both $\mathcal{T}_i$ and $\mathcal{T}_{i'}$.
 We then set
\begin{equation}\label{eq:RobinUpdateAuxiliary}
  g^{n+1}_{i',i;j}:=-g^{n}_{i,i';j}+2\frac{p}{2}u_{i;j}^{n+1}
    \sum_{j'}
    \lvert\bm{x}_{i;j}-\bm{x}_{i;j'}\rvert,
\end{equation}
where the sum is over all $j'$ such that
  $\lbrack\bm{x}_j\bm{x}_{j'}\rbrack$ is a boundary edge of both $\mathcal{T}_i$ and $\mathcal{T}_{i'}$.
For this we need however to store the auxiliary variables
$g^{n+1}_{i',i;j}$, because it is not possible to recover
$g^{n+1}_{i',i;j}$ from $u_i^{n+1}$ when $\bm{x}_j$ is a
cross-point. Only the sum over $i'$ of the $g^{n+1}_{i',i;j}$ can be
recovered from $u_i^{n+1}$. 

Since the $g^{n}_{i,i';j}$ represent a split of the
  discrete Robin conditions, we can deduce from them a split
  of the discrete Neumann conditions and introduce the
  $\mathcal{N}_{i',i;j}^{n}$. We set
\begin{equation}\label{eq:defN}
\mathcal{N}_{i,i';j}^{n+1}:=g^{n}_{i,i';j}-\frac{p}{2}\left(\sum_{j'}\lvert\bm{x}_j-\bm{x}_{j'}\rvert\right)u_{i;j}^{n+1},
\end{equation}
where the sum is over all $j'$ such that $\lbrack\bm{x}_{j}\bm{x}_{j'}\rbrack$ is a
boundary edge of both $\mathcal{T}_i$ and $\mathcal{T}_{i'}$. 
By Eqs.~\eqref{eq:LumpedBoundaryMatrix},~\eqref{eq:NeumannDiscrete}
and~\eqref{eq:OSMStepSystem}, we obtain
\begin{equation}\label{eq:sumNeumann}
\mathcal{N}_{i;j}(u_i^{n+1})=\sum_{i'}\mathcal{N}_{i,i';j}^{n+1},
\end{equation}
where the sum is over all $i'$ such that there exists an edge
originating from $\bm{x}_j$ that is a boundary edge of both
$\mathcal{T}_i$ and $\mathcal{T}_{i'}$.

\subsection{Convergence of the auxiliary variable method}

At the continuous level, one can prove convergence of OSM using energy
estimates, see for example~\cite{Lions:1990:SAM,
  Despres:1991:DomainDecompositionHelmholtzProblem}.  At the discrete
level, this technique fails in general
\cite{Gander.Kwok:ApplicabilityLionsEnergyEstimates}, precisely
because of the cross-points. 

We prove now convergence of OSM in the
presence of cross-points, when auxiliary variables are used.
\begin{lemma}\label{Lemma1}
  Let $\bm{f}=(f_j)$ be a right hand side of the discretized operator
  $\eta-\Lapl$ with $f_{i;j}$ such that $\sum_if_{i;j}=f_j$. Then there exist
  $g_{i,i';j}$ which are a fixed point of the discrete Optimized
  Schwarz algorithm with auxiliary variables near cross points.
\end{lemma}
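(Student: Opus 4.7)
The plan is to construct the fixed point explicitly from the discrete mono-domain solution. Let $\bm{u}$ be the unique finite element solution of the global problem $(\eta-\Lapl)u=f$ on $\mathcal{P}(\mathcal{T})$ with right-hand side $\bm{f}$, and set $u_i := u_{|\Omega_i} \in \mathcal{P}(\mathcal{T}_i)$. These $u_i$ will be the primal part of the fixed point; what remains is to construct auxiliary variables $g_{i,i';j}$ consistent with~\eqref{eq:OSMStepSystem},~\eqref{eq:sumgiiprime}, and~\eqref{eq:RobinUpdateAuxiliary}.

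The next step is to reformulate the two fixed-point equations as conditions on the Neumann splittings $\mathcal{N}_{i,i';j}$ introduced in~\eqref{eq:defN}. Requiring that~\eqref{eq:OSMStepSystem} be solved by $\bm{u}_i$, once the lumped boundary-mass term is transferred to the left, is equivalent to
\begin{equation*}
\sum_{i'} \mathcal{N}_{i,i';j} \;=\; \mathcal{N}_{i;j}(u_i)
\end{equation*}
at each boundary vertex $\bm{x}_j$ (the sum ranging over subdomains $i'$ sharing a mesh edge with $\Omega_i$ at $\bm{x}_j$), by~\eqref{eq:sumNeumann}. Stationarity of~\eqref{eq:RobinUpdateAuxiliary}, using $u_{i;j}=u_{i';j}$ since $\bm{u}$ is single-valued, reduces to the antisymmetry
\begin{equation*}
\mathcal{N}_{i',i;j} \;=\; -\mathcal{N}_{i,i';j}.
\end{equation*}

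At each interface vertex $\bm{x}_j$ this is a discrete network-flow problem on the graph $G_j$ whose vertices are the subdomains containing $\bm{x}_j$ on their boundary and whose edges are the pairs $(i,i')$ sharing a mesh edge at $\bm{x}_j$: we seek an antisymmetric edge function whose node-divergences are $\mathcal{N}_{i;j}(u_i)$. Such a function exists as soon as $G_j$ is connected and the prescribed divergences sum to zero. Connectedness is geometric: the subdomains around $\bm{x}_j$ are cyclically (or, if $\bm{x}_j\in\partial\Omega$, linearly) ordered by adjacency, so $G_j$ is a cycle or a path. The divergence-sum identity $\sum_i \mathcal{N}_{i;j}(u_i)=0$ is precisely the Galerkin equation for the global mono-domain solution tested against the global hat function $\phi_j$, combined with the splitting $\sum_i f_{i;j}=f_j$ of the right-hand side.

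Given any such flow, chosen independently at each interface vertex, I recover $g_{i,i';j}$ from~\eqref{eq:defN}; verifying that all fixed-point equations hold is then a direct substitution, and interior-node equations are automatic because $u$ already solves the mono-domain problem. The existence of the flow is elementary once connectedness and the divergence-sum condition are in place; the main obstacle is bookkeeping, making sure that the lumped boundary contributions cancel correctly under the substitution $g_{i,i';j} \leftrightarrow \mathcal{N}_{i,i';j}$ and that the neighbour index set in~\eqref{eq:sumgiiprime} exactly matches the edges of $G_j$ used to define the flow.
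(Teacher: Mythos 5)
Your proposal is correct and follows essentially the same route as the paper: construct the candidate fixed point from the discrete mono-domain solution, reduce the two stationarity conditions to an antisymmetric splitting of the Neumann quantities $\mathcal{N}_{i;j}(u_i)$ at each interface vertex, and solve the resulting zero-divergence-sum flow problem on the local adjacency graph of subdomains. The flow-existence step you call ``elementary'' is exactly the paper's Lemma~\ref{lemma:graphes} on connected graphs, so the two arguments coincide.
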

\begin{proof}
Let $\bm{u}$ be the discrete mono-domain solution. 
Let $\bm{u}_i$ be the restriction of $\bm{u}$ to $\mathcal{T}_i$. Let
\begin{align*}
\mathcal{E}_{i;j}&:=\{j'',
               \lbrack\bm{x}_j\bm{x}_{j''}\rbrack\text{ boundary edge of
               $\mathcal{T}_i$}\},\\
\mathcal{E}_{i;i';j}&:=\{j'',
               \lbrack\bm{x}_j\bm{x}_{j''}\rbrack\text{ boundary edge of
               $\mathcal{T}_i$ and of $\mathcal{T}_{i'}$ }\}.
\end{align*}
We use formula~\eqref{eq:NeumannDiscrete} to obtain the existence of
$g_{i;j}$ such that the solution of~\eqref{eq:OSMStepSystem} are the
$\bm{u}_i$. For any given
cross-point node $\bm{x}_j$, we have to split the $g_{i;j}$ into
$g_{i,i';j}$ that satisfy
\begin{align*}
  g_{i;j}&=\sum_{i'\;\text{s.t.}\; \mathcal{E}_{i;i';j}\neq \emptyset}g_{i,i';j},\\
  g_{i',i;j}&=-g_{i,i';j}+2\frac{p}{2}u_{j}\sum_{j'\in \mathcal{E}_{i;i';j}}\lvert\bm{x}_{i,j}-\bm{x}_{i,j'}\rvert.
\end{align*}
Subtracting the Dirichlet parts on both sides in the first equation,
and transferring half the Dirichlet part in the second equation from
the right to the left,  we get 
\begin{align*}
  g_{i;j}-\frac{p}{2}u_{j}\sum_{j''\in\mathcal{E}_{i;j}}
    \lvert\bm{x}_{i,j}-\bm{x}_{i,j'}\rvert&=\sum_{i'\;\text{s.t.}\;\mathcal{E}_{i;i';j}\neq\emptyset} (g_{i,i';j}-\frac{p}{2}u_{j}
    \sum_{j''\in\mathcal{E}_{i,i';j}}
    \lvert\bm{x}_{i,j}-\bm{x}_{i,j'}\rvert),\\
  g_{i',i;j}-\frac{p}{2}u_{j}\sum_{j''\in\mathcal{E}_{i,i';j}}
    \lvert\bm{x}_{i,i',j}-\bm{x}_{i,j'}\rvert&=-(g_{i,i';j}-\frac{p}{2}u_{j}
    \sum_{j''\in\mathcal{E}_{i,i';j}}
    \lvert\bm{x}_{i,j}-\bm{x}_{i,j'}\rvert),
\end{align*}
We recognize the discrete Neumann conditions, see~\eqref{eq:defN}. So the
problem becomes the concrete splitting problem of Neumann conditions:
given $\mathcal{N}_{i;j}$, find $\mathcal{N}_{i,i';j}$ such that
\begin{align}\label{firsteqN}
\mathcal{N}_{i;j}&=\sum_{i'\;\text{s.t.}\;\mathcal{E}_{i;i';j}\neq\emptyset}\mathcal{N}_{i,i';j},\\
\mathcal{N}_{i,i';j}&=-\mathcal{N}_{i',i;j}.
\end{align}
By~\eqref{eq:MonodomainDiscreteSolutionsProperty}, since $\bm{u}$ is
the discrete mono-domain solution, we have
$\sum_i\mathcal{N}_{i;j}=0$. 
For each cross-point $\bm{x}_j$, we define a graph $G$, whose set of
vertices $V(G)$ and set of
edges $E(G)$ are defined as 
\begin{align*}
V(G)&=\{i,\;\bm{x}_j\in\Omega_i\},\\
E(G)&=\{\{i,i'\} \subset V(G),\;\text{$\mathcal{T}_i$ and
  $\mathcal{T}_{i'}$ share an edge originating from $\bm{x}_j$}\}.
\end{align*}
We apply now Lemma~\ref{lemma:graphes} to conclude the proof. 
\end{proof}

\begin{theorem}\label{ConvergenceTh}
The optimized Schwarz method (\ref{algo:ContinuousOSM}) discretized
with finite elements (\ref{FEMOSM}) and using auxiliary variables
for the transmission conditions is convergent.
\end{theorem}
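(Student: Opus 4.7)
The plan is to mimic Lions' continuous energy argument at the discrete level, using the auxiliary variables to do the bookkeeping that is obstructed at cross-points in a plain finite element discretization. By Lemma~\ref{Lemma1} a fixed point exists, consisting of the discrete mono-domain solution $\bm{u}$ together with some splitting $g^*_{i,i';j}$ of its Robin data. I would therefore work with the errors $\bm{e}_i^n:=\bm{u}_i^n-\bm{u}_i$ and $\gamma^n_{i,i';j}:=g^n_{i,i';j}-g^*_{i,i';j}$, which satisfy the same algorithm with zero forcing, and show that $\bm{e}_i^n\to 0$.

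The core step is a discrete analogue of the Lions identity $(a+b)^2-(a-b)^2=4ab$. Writing $L_{i,i';j}:=\sum_{j'\in\mathcal{E}_{i;i';j}}\lvert\bm{x}_j-\bm{x}_{j'}\rvert$, the error form of~\eqref{eq:RobinUpdateAuxiliary} reads $\gamma^{n+1}_{i',i;j}=-\gamma^{n}_{i,i';j}+pL_{i,i';j}e^{n+1}_{i;j}$, while the error form of~\eqref{eq:defN} gives $\gamma^{n}_{i,i';j}=\mathcal{N}^{n+1}_{i,i';j}+\tfrac{p}{2}L_{i,i';j}e^{n+1}_{i;j}$. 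Squaring the update and substituting, the $(pLe)^2$ terms cancel and the identity
\begin{equation*}
(\gamma^{n+1}_{i',i;j})^2-(\gamma^{n}_{i,i';j})^2=-2p\,L_{i,i';j}\,e^{n+1}_{i;j}\,\mathcal{N}^{n+1}_{i,i';j}
\end{equation*}
emerges, where $\mathcal{N}^{n+1}_{i,i';j}$ is evaluated on the error by linearity.

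I would then sum this identity over all triples $(i,i',j)$ with the weight $1/L_{i,i';j}$. This weight kills the $L$-factor on the right and, together with the symmetry $L_{i,i';j}=L_{i',i;j}$, lets me rename $i\leftrightarrow i'$ on the left so that the two squares rebuild the same boundary energy $E_n:=\sum_{i,i',j}(\gamma^{n}_{i,i';j})^2/L_{i,i';j}$ at both iterations. The right hand side becomes $-2p\sum_{i,j}e^{n+1}_{i;j}\sum_{i'}\mathcal{N}^{n+1}_{i,i';j}$, whose inner sum collapses by~\eqref{eq:sumNeumann} to $\mathcal{N}_{i;j}(\bm{e}_i^{n+1})$. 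Since the error satisfies the homogeneous interior equation, this equals $(\mathbf{A}_i\bm{e}_i^{n+1})_j$ on the boundary and vanishes at interior nodes, so the sum reassembles the quadratic form $(\bm{e}_i^{n+1})^{\top}\mathbf{A}_i\bm{e}_i^{n+1}$. Since $\mathbf{A}_i$ is symmetric positive definite for $\eta>0$, we arrive at
\begin{equation*}
E_{n+1}-E_{n}=-2p\sum_{i}(\bm{e}_i^{n+1})^{\top}\mathbf{A}_i\bm{e}_i^{n+1}\le 0.
\end{equation*}
The sequence $E_n$ is non-negative and non-increasing, hence convergent, so its telescoping increments force $\|\bm{e}_i^{n+1}\|_{\mathbf{A}_i}\to 0$ and therefore $\bm{u}_i^n\to\bm{u}_i$.

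The main obstacle, and precisely the point at which an unaided finite element discretization fails, is producing an edge-by-edge splitting of the discrete Neumann trace at each cross-point whose per-edge pieces simultaneously sum back to $\mathcal{N}_{i;j}$ from each side of every interface. Without such a consistent splitting the collapse $\sum_{i'}\mathcal{N}^{n+1}_{i,i';j}=\mathcal{N}_{i;j}(\bm{u}_i^{n+1})$ invoked above simply does not hold, and the discrete energy proof breaks down. The auxiliary variables $g^n_{i,i';j}$ are designed exactly to carry such a splitting through the iteration, which is why the Lions-type argument now succeeds at cross-points; a subsidiary but essential choice is the weight $1/L_{i,i';j}$ in the summation, which is what makes the subdomain energies reassemble cleanly on the right hand side.
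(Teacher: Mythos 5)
Your proposal is correct and follows essentially the same route as the paper's proof: reduce to the homogeneous (error) equations via Lemma~\ref{Lemma1}, use~\eqref{eq:sumNeumann} to split the boundary term of the subdomain energy identity over the auxiliary variables, apply the Lions-type completion of squares with the weight $1/\bigl(2p\sum_{j''}\lvert\bm{x}_j-\bm{x}_{j''}\rvert\bigr)$ to turn it into the telescoping difference $\lvert g^{n}_{i,i';j}\rvert^2-\lvert g^{n+1}_{i',i;j}\rvert^2$, and conclude from boundedness of the accumulated energies. The only cosmetic differences are that you phrase the conclusion as monotone decrease of the interface energy $E_n$ rather than summing the identity over $n$, and that you subtract the fixed point explicitly instead of assuming $\bm{f}_i=0$ without loss of generality.
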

\begin{proof}
Because of
Lemma~\ref{Lemma1}, we can assume without loss of generality that
$\bm{f}_i=0$. For each subdomain $\Omega_i$, 
we multiply the definition of the
discrete Neumann condition~\eqref{eq:NeumannDiscrete} by $u_{i;j}$, then sum over all
$j$ such that $\bm{x}_j$ belongs to $\overline{\Omega}_i$ to obtain
\begin{equation*}
\begin{split}
\int_{\Omega_i}\lvert\nabla u_i^{n+1}\rvert^2&+\eta\int_{\Omega_i}\lvert u_i^{n+1}\rvert^2\\
&=\sum_{\bm{x}_j\in\partial\Omega_i}\mathcal{N}_{i;j}^{n+1}u_{i;j}^{n+1}.\\
&=\sum_{i'}\sum_{\bm{x}_j\in\partial\Omega_i\cap\partial\Omega_{i'}}\mathcal{N}_{i,i';j}^{n+1}u_{i;j}^{n+1}\qquad\text{(by~\eqref{eq:sumNeumann})}\\
&=\sum_{i'}\sum_{\bm{x}_j\in\partial\Omega_i\cap\partial\Omega_{i'}}
\frac{\lvert\mathcal{N}_{i,i';j}^{n+1}+\frac{p}{2}\sum_{j''}\lvert\bm{x}_j-\bm{x}_{j''}\rvert
  u_{i;j}^{n+1}\rvert^2}
{2p \sum_{j''}\lvert\bm{x}_j-\bm{x}_{j''}\rvert }
\\&\phantom{=\sum_{i'}\sum_{\bm{x}_j\in\partial\Omega_i\cap\partial\Omega_{i'}}}-
\frac{\lvert\mathcal{N}_{i,i';j}^{n+1}-\frac{p}{2}\sum_{j''}\lvert\bm{x}_j-\bm{x}_{j''}\rvert
u_{i;j}^{n+1}\rvert^2}
{2p \sum_{j''}\lvert\bm{x}_j-\bm{x}_{j''}\rvert},\\
&=\sum_{i'}\sum_{\bm{x}_j\in\partial\Omega_i\cap\partial\Omega_{i'}}\frac{\lvert
  g_{i,i';j}^{n}\rvert^2-\lvert g_{i',i;j}^{n+1}\rvert^2}{2p
  \sum_{j''}\lvert\bm{x}_j-\bm{x}_{j''}\rvert }
\qquad\text{(by~\eqref{eq:defN} and~\eqref{eq:RobinUpdateAuxiliary})}.
\end{split}
\end{equation*}
We now sum over all subdomains $i$ and over the iteration index $n$ to get
\begin{equation*}
\begin{split}
\sum_{n=0}^{N}\sum_{i=1}^I\int_{\Omega_i}\lvert\nabla
u_i^{n+1}\rvert^2+\eta\int_{\Omega_i}\lvert u_i^{n+1}\rvert^2&=
\sum_{i,i'}\sum_{\bm{x}_j\in\partial\Omega_i\cap\partial\Omega_{i'}}\frac{\lvert
  g_{i,i';j}^{0}\rvert^2-\lvert g_{i',i;j}^{N+1}\rvert^2}{2p
  \sum_{j''}\lvert\bm{x}_j-\bm{x}_{j''}\rvert}\\
&\leq \sum_{i,i'}\sum_{\bm{x}_j\in\partial\Omega_i\cap\partial\Omega_{i'}}\frac{\lvert
  g_{i,i';j}^{0}\rvert^2}{2p
  \sum_{j''}\lvert\bm{x}_j-\bm{x}_{j''}\rvert}.
\end{split}
\end{equation*}
This shows that the sum over the energy over all iterates and
subdomains stays bounded, as the iteration number $N$ goes to
infinity, which implies that the energy of the iterates, and hence the
iterates converge to zero.  \qedhere
\end{proof}

\subsection{Numerical observation using auxiliary variables} 
  \label{subsect:NumericalObservationsAuxiliaryVariables}

Using auxiliary variables can have surprising numerical side
effects. We show in Figure~\ref{fig:ErrorsEpsilonMachineUnderflowLevel}
 \begin{figure}
  \centering
  \includegraphics[width=0.7\textwidth]{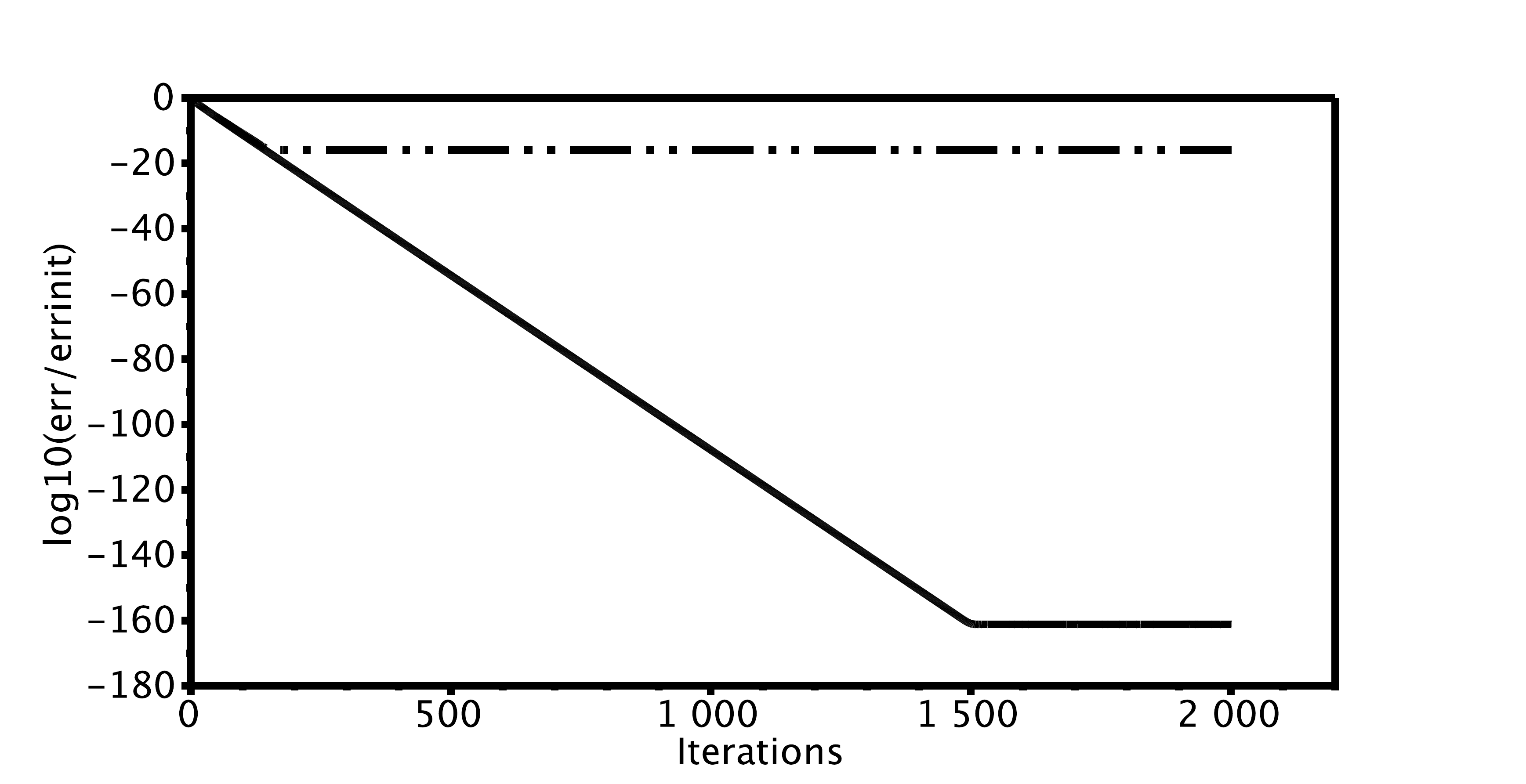}
  \caption{Error using OSM with auxiliary variables for $4\times1$
    (solid) and $2\times2$ (dashed-dotted) subdomains}
  \label{fig:ErrorsEpsilonMachineUnderflowLevel}
\end{figure}
the error measured in $L^\infty$ of OSM with auxiliary variables for
the domain $\Omega=(0,4)^2$ decomposed once into $2\times 2$
subdomains and once into $4\times1$ subdomains, for $p=2.0$ and
$\eta=0.0$ and mesh size $h=1/10$.  We iterate directly on the error
equations, $f=0$, and initialize the transmission conditions with
random values. We observe that in the presence of cross points,
convergence stagnates around the machine precision, whereas without,
the stagnation comes much later.

To understand these results, we need to consider floating point
arithmetic, see~\cite{IEEE-754:2008, Sterbenz:1973:Floating,
  Goldberg:1991:WhatEvery}, and in particular the machine precision 
{\tt macheps} and the smallest positive floating point number {\tt
  minreal}.  We used in the above experiment double precision in
\texttt{C++} so {\tt macheps}$=2^{-53}\approx1.1\cdot^{-16}$ and
{\tt minreal}$\approx 4.9\cdot 10^{-324}$.  Had we been computing a
real problem with nonzero right hand side $f$, we would expect
stagnation near the machine precision. However, when iterating
directly on the errors, stagnation should occur much later, at the
level of the smallest positive floating point number.  

To analyze the
early stagnation observed, we consider a simple model problem with
$2\times 2$ subdomains, see Figure~\ref{fig:DegenerateCase},
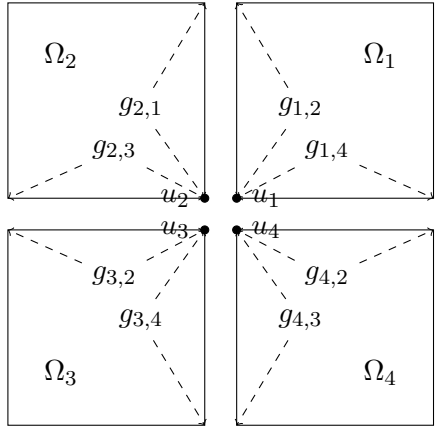
\begin{figure}
\centering
\begin{tikzpicture}[scale=0.7]

\draw (-4,-4) rectangle (-0.3,-0.3);
\draw (-4,0.3) rectangle (-0.3,4);
\draw (0.3,-4) rectangle (4,-0.3);
\draw (0.3,0.3) rectangle (4,4);

\node at (3,3) {$\Omega_1$};
\node at (-3,3) {$\Omega_2$};
\node at (-3,-3) {$\Omega_3$};
\node at (3,-3) {$\Omega_4$};

 \node[vertex,label=right:{$u_1$}] at (0.3,0.3) {}; 
\node[vertex, label=left:{$u_2$}] at (-0.3,0.3) {}; 
\node[vertex, label=left:{$u_3$}] at (-0.3,-0.3) {}; 
\node[vertex, label=right:{$u_4$}] at (0.3,-0.3) {}; 

\node (A12) at (1.5,2) {$g_{1,2}$}; 
\draw[dashed,->] (A12) -- (0.3,0.3);
\draw[dashed,->] (A12) -- (0.3,4);

\node (A14) at (2,1.2) {$g_{1,4}$}; 
\draw[dashed,->] (A14) -- (0.3,0.3);
\draw[dashed,->] (A14) -- (4,0.3);

\node (A21) at (-1.5,2) {$g_{2,1}$}; 
\draw[dashed,->] (A21) -- (-0.3,0.3);
\draw[dashed,->] (A21) -- (-0.3,4);

\node (A23) at (-2,1.2) {$g_{2,3}$}; 
\draw[dashed,->] (A23) -- (-0.3,0.3);
\draw[dashed,->] (A23) -- (-4,0.3);

\node (A34) at (-1.5,-2) {$g_{3,4}$}; 
\draw[dashed,->] (A34) -- (-0.3,-0.3);
\draw[dashed,->] (A34) -- (-0.3,-4);

\node (A32) at (-2,-1.2) {$g_{3,2}$}; 
\draw[dashed,->] (A32) -- (-0.3,-0.3);
\draw[dashed,->] (A32) -- (-4,-0.3);

\node (A43) at (1.5,-2) {$g_{4,3}$}; 
\draw[dashed,->] (A43) -- (0.3,-0.3);
\draw[dashed,->] (A43) -- (0.3,-4);

\node (A41) at (2,-1.2) {$g_{4,2}$}; 
\draw[dashed,->] (A41) -- (0.3,-0.3);
\draw[dashed,->] (A41) -- (4,-0.3);
\end{tikzpicture}
\caption{Degenerate case.}\label{fig:DegenerateCase}
\end{figure}
where there is exactly one $Q_1$ element per subdomain and the only
interior node is a cross-point.  This means the mono-domain solutions
$u$ is a scalar. We thus have $\Omega=(-h,h)\times(-h,h)$, and for the
subdomains $\Omega_1=(0,h)\times(0,h)$, $\Omega_2=(-h,0)\times (0,h)$,
$\Omega_3=(-h,0)\times(-h,0)$,
$\Omega_4=(0,h)\times(-h,0)$. We apply the OSM with lumped
 Robin transmission conditions and $f=0$. Since
there is only one interior node in the whole mesh, there is only
a single test function $\phi$ with
$\phi(x,y)= (1-\lvert x\rvert) (1-\lvert y\rvert)$.
By Eq. \eqref{eq:OSMStepSystem}, we have
\begin{equation*}
\begin{split} 
A_{1}=A_{2}=A_{3}=A_{4}&=\eta h^2\left(\int_0^1(1-x)^2\ud x\right)^2+\int_0^1(1-x)^2\ud x+\int_0^1(1-y)^2\ud y\\
&=\frac{\eta h^2}{9}+\frac{2}{3}.
\end{split}
\end{equation*} 
We use lumped Robin transmission conditions, and
by~\eqref{eq:LumpedBoundaryMatrix}, we get
\begin{equation*}
B_{1}=B_{2}=B_{3}=B_{4}= \frac{p}{2}h\left(\int_0^1(1-x)\ud x+\int_0^1(1-y)\ud y\right)=ph.
\end{equation*}
Therefore, we have
by~\eqref{eq:OSMStepSystem} and~\eqref{eq:sumgiiprime}
\begin{equation*}
u_{i}^{n+1}=\frac{g_i^{n+1}}{\frac{2}{3}+\frac{\eta h^2}{9}+ph},\quad\text{$i=1,\ldots,4$}.
\end{equation*}
Thus, for the OSM iteration, we obtain
\begin{equation}\label{eq:ufromginDegenerate}
\begin{aligned}
u_{1}^{n+1}&=\frac{g_{12}^n+g_{14}^n}{\frac{2}{3}+\frac{\eta h^2}{9}+ph},&
 u_{2}^{n+1}&=\frac{g_{23}^n+g_{21}^n}{\frac{2}{3}+\frac{\eta h^2}{9}+ph},\\
u_{3}^{n+1}&=\frac{g_{32}^n+g_{34}^n}{\frac{2}{3}+\frac{\eta h^2}{9}+ph},&
u_{4}^{n+1}&=\frac{g_{43}^n+g_{41}^n}{\frac{2}{3}+\frac{\eta h^2}{9}+ph},
\end{aligned}
\end{equation}
and by~\eqref{eq:RobinUpdateAuxiliary}, we get
\begin{equation*}
  g^{n+1}_{i',i}:=-g^{n}_{i,i'}+phu_{i}^{n+1}.
\end{equation*}
Eliminating the $u_i^{n+1}$ from the iteration leads to
\begin{equation*}
\arraycolsep0.25em
\begin{bmatrix}
g_{1,2}^{n+1}\\g_{2,1}^{n+1}\\g_{2,3}^{n+1}\\g_{3,2}^{n+1}\\g_{3,4}^{n+1}\\g_{4,3}^{n+1}\\g_{4,1}^{n+1}\\g_{1,4}^{n+1}
\end{bmatrix}
=
\begin{bmatrix}
0                & \alpha-1 & \alpha         &0                &0                 &0                &0               &0\\
\alpha-1 &0                 &          0        &0                &0                 &0                &0               &\alpha\\
0                &0                 &          0        &\alpha-1 &\alpha         &0                &0               &0\\
0                & \alpha        & \alpha-1  &0                &0                 &0                &0               &0\\
0                &0                 &0                  &0                &         0        &\alpha-1 &\alpha       &0\\
0                &0                 &0                  & \alpha       &\alpha-1  &0                &0               &0\\
\alpha        &0                 &0                  &0                 &0                &0                &0               &\alpha-1 \\
0                &0                 &0                  &0                 &0                &\alpha        &\alpha-1 &0
\end{bmatrix}
\begin{bmatrix}
g_{1,2}^{n}\\g_{2,1}^{n}\\g_{2,3}^{n}\\g_{3,2}^{n}\\g_{3,4}^{n}\\g_{4,3}^{n}\\g_{4,1}^{n}\\g_{1,4}^{n},
 \end{bmatrix},
\end{equation*}
where we introduced the scalar quantity
\begin{equation*}
\alpha=\frac{ph}{\frac{\eta h^2}{9}+\frac{2}{3}+ph}.
\end{equation*}
Since $0<\alpha<1$, the $\ell^\infty$ norm of this iteration matrix is
$1$, and hence its spectral radius is bounded by $1$.  Note however
that $1$ and $-1$ are eigenvalues of this matrix, with corresponding
eigenvectors
\[
(-1,1,-1,1,-1,1,-1,1)^T
\quad\mbox{and}\quad
(1,1,-1,-1,1,1,-1,-1)^T.
\]
This shows that the vector of auxiliary variables will not converge to
$0$ in general. 
However, the modes with eigenvalue $+1$ and $-1$ make
no contribution to the $u_i$, see Eq.~\eqref{eq:ufromginDegenerate}, so the algorithm will
converge for the $u_i^n$, as proved in Theorem~\ref{ConvergenceTh}. In
floating point arithmetic however, the fact that the auxiliary
variables do not converge (and remain $O(1)$ because of their
initialization) prevents the algorithm applied to the error equations
to converge in $u_i^n$ below the machine precision, as we observed in
Figure~\ref{fig:ErrorsEpsilonMachineUnderflowLevel}. Luckily, this has
no influence when solving a real problem with non-zero right hand
side, but must be remembered when testing codes.

\section{Complete communication method}\label{sect:CompleteCommunications}

We now present a different approach, not using auxiliary variables,
but still guaranteeing that the discrete mono-domain solution is a
fixed point of the discrete OSM. This requires subdomains to
communicate at cross-points with every subdomain sharing the
cross-point. Most methods obtained algebraically using matrix
splittings use complete communication. To get Domain Decomposition
methods directly from the matrix, one usually duplicates the
components corresponding to the nodes lying on the interfaces between
subdomains so that each node is present in the matrix as many times as
the number of subdomains it belongs to, see for
example~\cite{Gander.Kwok:2012:BestRobinParameters,loisel2013condition}. To
prove convergence of this approach needs however different techniques
from the energy estimates, see
\cite{Gander.Kwok:2012:BestRobinParameters,loisel2013condition}. 

\subsection{Keeping the discrete mono-domain solution a fixed point}\label{subsect:MonodomainFixedPoint}

Consider a cross point $\bm{x}_j$ belonging to subdomains
$\overline{\Omega}_{i}$ for $i$ in $\{1,\ldots,I\}$ with $I\geq 3$. We
consider local linear updates for the discrete Robin transmission
conditions at cross-points of the form
\begin{equation*}
  g_{i;j}^{n+1}=\ell_{\mathcal{D}}((u_{i;j}^{n})_{1\leq i\leq I})
    +\ell_{\mathcal{N}}((\mathcal{N}_{i;j}(u_{i})))_{1\leq i\leq I}),
\end{equation*}
where $\ell_{\mathcal{D}}$ and $\ell_{\mathcal{N}}$ are linear
maps from $\mathbb{R}^I$ to $\mathbb{R}^I$, which can be
represented by matrices,
\begin{equation}
\begin{bmatrix}
g_{1;j}^{n+1}\\\vdots\\\vdots\\g_{I;j}^{n+1}
\end{bmatrix}
=\mathbf{A}_{\mathcal{D}}
\begin{bmatrix}
u_{1;j}^{n}\\\vdots\\\vdots\\u_{I;j}^{n}
\end{bmatrix}
+
\mathbf{A}_{\mathcal{N}}
\begin{bmatrix}
\mathcal{N}_{1;j}^{n}\\\vdots\\\vdots\\\mathcal{N}_{I;j}^{n}
\end{bmatrix}.
\end{equation}
At the cross point $\bm{x}_j$, the mono-domain
solution satisfies \eqref{eq:MonodomainDiscreteSolutionsProperty}, i.e.
\begin{equation}\label{eq:MonodomainCrossPoints}
  u_{i;j}=u_{1;j}\text{ for all $i$ in $\{1,\ldots,I\}$},\quad
    \sum_{i=1}^I\mathcal{N}_{i;j}(u_{i})=0.
\end{equation}
For the mono-domain solution to be a fixed point, $g^{n+1}_{i;j}$
should be equal to $g^{n}_{i;j}$ whenever
conditions~\eqref{eq:MonodomainCrossPoints} are satisfied.  Therefore,
the matrices must satisfy
\begin{align}\label{eq:FixedPointConditions}
  (\mathbf{A}_{\mathcal{N}})_{ii'}&=\delta_{i,i'}-\alpha_{i},&
    \sum_{i'=1}^I(\mathbf{A}_{\mathcal{D}})_{ii'}
    &=\frac{p}{2}\sum_{\begin{subarray}{l}j''\;\text{s.t.}\;
               \lbrack\bm{x}_j\bm{x}_{j''}\rbrack\text{ is a}\\\text{ boundary edge of }\mathcal{T}_i \end{subarray}}\lvert\bm{x}_j-\bm{x}_{j''}\rvert,
\end{align}
for some constants $\alpha_i$.

\subsection{An intuitive Neumann splitting near cross-points}\label{subsect:SplittingNeumann}

\begin{figure}
\centering
\begin{tikzpicture}[scale=1.5]
\node[vertex] (O) at (0,0.5) {} ;
\coordinate (A) at (50:2) {};
\coordinate (B) at (110:2.4) {};
\coordinate (C) at (190:1.8) {};
\coordinate (D) at (250:2.2) {};
\coordinate (E) at (340:2.25) {};
\draw[double,thick] (O) -- (barycentric cs:O=-0.25,A=0.75);
\draw[double,thick] (O) -- (barycentric cs:O=-0.1,B=0.75);
\draw[double,thick] (O) -- (barycentric cs:O=-0.25,C=0.75);
\draw[double,thick] (O) -- (barycentric cs:O=-0.1,D=0.75);
\draw[double,thick] (O) -- (barycentric cs:O=-0.25,E=0.75);

\node at (barycentric cs:O=-0.2,A=0.5,B=0.5) {$\Omega_1$};
\node at (barycentric cs:O=-0.2,B=0.5,C=0.5) {$\Omega_2$};
\node at (barycentric cs:O=-0.2,C=0.5,D=0.5) {$\Omega_3$};
\node at (barycentric cs:O=-0.2,D=0.5,E=0.5) {$\Omega_4$};
\node at (barycentric cs:O=-0.2,E=0.5,A=0.5) {$\Omega_5$};

\draw (A) -- (B) -- (C) -- (D) -- (E) -- (A) ;

\draw[dashed] (O) --  (80:1.5) -- (B);
\node at (80:1.5) {$\mathcal{N}_1^+$};

\draw[dashed] (O) --  (130:1) -- (B);
\node at (130:1) {$\mathcal{N}_2^-$};

\draw[dashed] (O) --  (270:1 ) -- (D);
\node at (270:1) {$\mathcal{N}_4^-$};

\draw[dashed] (O) --  (320:1 ) -- (E);
\node at (320:1) {$\mathcal{N}_4^+$};

\end{tikzpicture}
\caption{Splitting of $\mathcal{N}_i$ into $\mathcal{N}_i^+$ and $\mathcal{N}_i^-$}
\label{fig:DecompositionNeumann}
\end{figure}
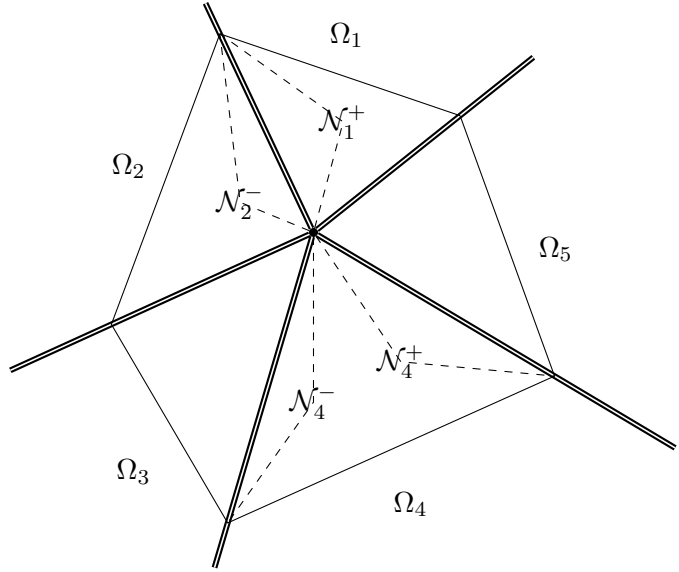

Suppose we are given $I$ values $(\mathcal{N}_i)_{i=1,\ldots,I}$, each
representing the discrete Neumann values at $\bm{x}_j$ for subdomain
$\Omega_i$. Our goal is to find a splitting
$(\mathcal{N}_i^+,\mathcal{N}_i^-)_{i=1,\ldots,I}$ such that
\begin{equation}
  \mathcal{N}_i=\mathcal{N}_i^++\mathcal{N}_i^-.
\end{equation}
There are obviously many such splittings. At the
continuous level, the mono-domain solution has no Neumann jumps at the
interface between subdomains. It thus makes sense, at an intuitive level, to search for a
splitting minimizing the Neumann jumps
$\mathcal{N}_{i+1}^-+\mathcal{N}_i^+$, see Fig.~\ref{fig:DecompositionNeumann}.
Therefore, we choose to minimize
\begin{equation*}
  \sum_{i=1}^I\lvert \mathcal{N}_{i}^{+}+\mathcal{N}_{i+1}^{-}\rvert^2,
\end{equation*}
where, by convention,  $\mathcal{N}_{I+1}^{-}$ denotes $\mathcal{N}_{1}^{-}$.
We will see that this still does not give a unique solution,
but all such splittings give rise to the same transmission
conditions in the OSM discretized by finite elements.

We denote by $\bm{a}\in\mathbb{R}^I$ the vector with
$a_i=\mathcal{N}_i^-$, which implies
$\mathcal{N}_i^+=\mathcal{N}_i-a_i$. We thus search for $\bm{a}$ in
$\mathbb{R}^I$ such that the function
\begin{equation*}
  \bm{a}\mapsto\sum_{i=1}^I\lvert -a_{i}+\mathcal{N}_{i}+a_{i+1} \rvert^2
\end{equation*}
is minimized, i.e. we want to compute the solution of
\begin{equation}\label{eq:LeastSquaredProblem}
  \mathop{\mathrm{argmin}}_{\bm{a}\in\mathbb{R}^I}
    \lVert\mathbf{L}\bm{a}-\bm{\mathcal{N}}\rVert_2^2,
\end{equation}
where the matrix $\mathbf{L}=(\ell_{ii'})_{1\leq i,i'\leq I}$ with
\begin{equation*}
\ell_{ij}=\begin{cases}
1&\text{if $i'=i$,}\\
-1&\text{if $i'=i+1\mod I$},\\
0&\text{otherwise},
\end{cases}
\end{equation*}
or more explicitly
\begin{equation*}
\mathbf{L}=
\begin{bmatrix}
1&-1&0&\ldots&0&0\\
0&1&-1&\ddots&\ddots&0\\
\vdots&\ddots&\ddots&\ddots&\ddots&\vdots\\
\vdots&\ddots&\ddots&\ddots&\ddots&\vdots\\
0&\ddots&\ddots&0&1&-1\\
-1&0&\ldots&0&0&1
\end{bmatrix}.
\end{equation*}
Equation~\eqref{eq:LeastSquaredProblem} is a standard least squared
problem, but its solution is not unique, since
$\Ker(\mathbf{L})=\mathbb{R}\lbrack1,\ldots,1\rbrack^{T}$.  If
we require in addition that $\bm{a}$ is orthogonal to $\Ker(\mathbf{L})$,
then $\bm{a}$ is unique and
\begin{equation}
  \bm{a}=\mathbf{L}^{\dagger}\bm{\mathcal{N}},
\end{equation}
where $\mathbf{L}^{\dagger}$ is the pseudo-inverse of $\mathbf{L}$,
and all the solutions to~\eqref{eq:LeastSquaredProblem} are then of
the form $\mathbf{L}^{\dagger}\bm{\mathcal{N}}
+\mathbb{R}\lbrack1,\ldots,1\rbrack^{T}$.

Since $\mathbf{L}$ is a circulant matrix, its pseudo-inverse
$\mathbf{L}^{\dagger}$ is also a circulant matrix. Let
$(\mu_i)_{i\in\mathbb{Z}}$ be $I$-periodic such that
$\ell^{\dagger}_{ii'}=\mu_{i'-i}$, which implies
\begin{equation*}
\mathbf{L}^{\dagger}=
\begin{bmatrix}
\mu_0&\mu_1&\cdots&\mu_{I-1}\\
\mu_{I-1}&\mu_0&\ddots&\vdots\\
\vdots&\ddots&\ddots&\mu_1\\
\mu_{1}&\cdots&\mu_{I-1}&\mu_{0}
\end{bmatrix}.
\end{equation*}
In addition, since
$\Ker(\mathbf{L})=\mathbb{R}\lbrack1,\ldots,1\rbrack^{T}$,
we have
\begin{equation*}
\mathbf{L}^{\dagger}\mathbf{L}
=\mathbf{I}-\frac{1}{I}\begin{bmatrix}
1&\ldots&1\\
\vdots&\ddots&\vdots\\
1&\ldots&1
\end{bmatrix},
\end{equation*}
and therefore,
$$
\mu_0-\mu_{I-1}=1-\frac{1}{I}\quad\mbox{and}\quad
\mu_{i}-\mu_{i-1}=-\frac{1}{I}\;\text{for all $1\leq i\leq I$}.
$$
Therefore, for all $i=0,\ldots,I-1$ we get
\begin{equation*}
  \mu_i=\mu_0-\frac{i}{I}.
\end{equation*}
Moreover, $\mathop{\mathrm{range}}(\mathbf{L}^{\dagger})=\Ker(\mathbf{L})^{\bot}$, and
therefore $\sum_{i=0}^{I-1}\mu_i=0$, which yields
$\mu_0=\frac{I-1}{2}$. Therefore, for all $i=0,\ldots,I-1$,
\begin{equation*}
  \mu_i=\frac{I-1}{2}-\frac{i}{I}.
\end{equation*}
We thus obtain for the solution of the least squares problem
\begin{equation*}
\begin{split}
  a_i&=\sum_{i'=1}^{I}\mu_{i'-i}\mathcal{N}_{i'},
\end{split}
\end{equation*}
which gives for the splitting of the Neumann values
\begin{align*}
\mathcal{N}_i^+&=\sum_{i'=1}^{I}\mu_{i'-i}\mathcal{N}_{i'},&
\mathcal{N}_i^-&=\mathcal{N}_i-\sum_{i'=1}^{I}\mu_{i'-i}\mathcal{N}_{i'}.
\end{align*}
 We can use this splitting now in the OSM to exchange the Neumann
contributions $\mathcal{N}_i^+$ and $\mathcal{N}_{i+1}^-$ in the Robin
transmission conditions, \textit{i.e.}, we set
\begin{equation*}
\begin{split}
 (\mathbf{A}_{\mathcal{N}}\bm{\mathcal{N}})_i &= -\mathcal{N}_{i+1}^{-}-\mathcal{N}_{i-1}^{+},\\
&=-\mathcal{N}_{i-1}+\sum_{i'=1}^{I}\mu_{i'-i+1}\mathcal{N}_{i'}-\sum_{i'=1}^{I}\mu_{i'-i-1}\mathcal{N}_{i'},\\
&=-\mathcal{N}_{i-1}+\sum_{i'=1}^{I}(\mu_{i'-i+1}-\mu_{i'-i-1})\mathcal{N}_{i'}.
\end{split}
\end{equation*}
But
\begin{equation*}
\mu_{i'-i+1}-\mu_{i'-i-1}=
\begin{cases}
1-\frac{2}{I}&\text{if $i'=i\mod I$},\\
1-\frac{2}{I}&\text{if $i'=i-1\mod I$},\\
-\frac{2}{I}&\text{otherwise}.
\end{cases}
\end{equation*}
Therefore, we set
\begin{equation*}
(\mathbf{A}_{\mathcal{N}}\bm{\mathcal{N}})_i=\mathcal{N}_i-\frac{2}{I}\sum_{i'=1}^{I}\mathcal{N}_{i'}.
\end{equation*}

\subsection{An intuitive splitting of the Dirichlet part}\label{subsect:SplittingDirichlet}
We must choose a matrix $\mathbf{A}_{\mathcal{D}}$
satisfying~\eqref{eq:FixedPointConditions}, \textit{i.e.}, satisfy:
\begin{equation*}
 \sum_{i'=1}^I(\mathbf{A}_{\mathcal{D}})_{ii'}
    =\frac{p}{2}\sum_{\begin{subarray}{l}j'',\bm{x}_{j''}\in\partial\Omega_i,\\
               \lbrack\bm{x}_j\bm{x}_{j''}\rbrack\text{ edge of $\mathcal{T}_i$} \end{subarray}}\lvert\bm{x}_j-\bm{x}_{j''}\rvert,
\end{equation*}
There are also many possible choices for
$(\mathbf{A}_{\mathcal{D}})_{ii'}$, but in contrast to the Neumann
conditions which are only known variationally, the Dirichlet values
are known on the boundary. 
Therefore, to split 
the sum of
$\lvert\bm{x}_{j}-\bm{x}_{j''}\rvert$, we look at which neighbouring
subdomain the edge $\lbrack\bm{x}_j\bm{x}_{j''}\rbrack$ belongs to: if one is $\overline{\Omega}_i$, and the other is
$\overline{\Omega}_{i'}$, then we put $p\lvert\bm{x}_j-\bm{x}_{j''}\rvert$  into
$(\mathbf{A}_{\mathcal{D}})_{ii'}$. Hence, we set
\begin{equation*}
(\mathbf{A}_{\mathcal{D}})_{ii'}
    =\begin{cases}\frac{p}{2}\sum_{\begin{subarray}{l}j'',\bm{x}_{j''}\in\partial\Omega_i\cap\partial\Omega_{i'},\\
               \lbrack\bm{x}_j\bm{x}_{j''}\rbrack\text{ edge of
               }\mathcal{T}_i \end{subarray}}\lvert\bm{x}_j-\bm{x}_{j''}\rvert&\text{if
           $i'\neq i$,}\\
         0&\text{if $i'=i$.}
           \end{cases}
\end{equation*}

\subsection{Numerical simulations}\label{subsect:CompleteNumericalSimulations}
\begin{figure}
\centering
  \includegraphics[width=0.7\textwidth]{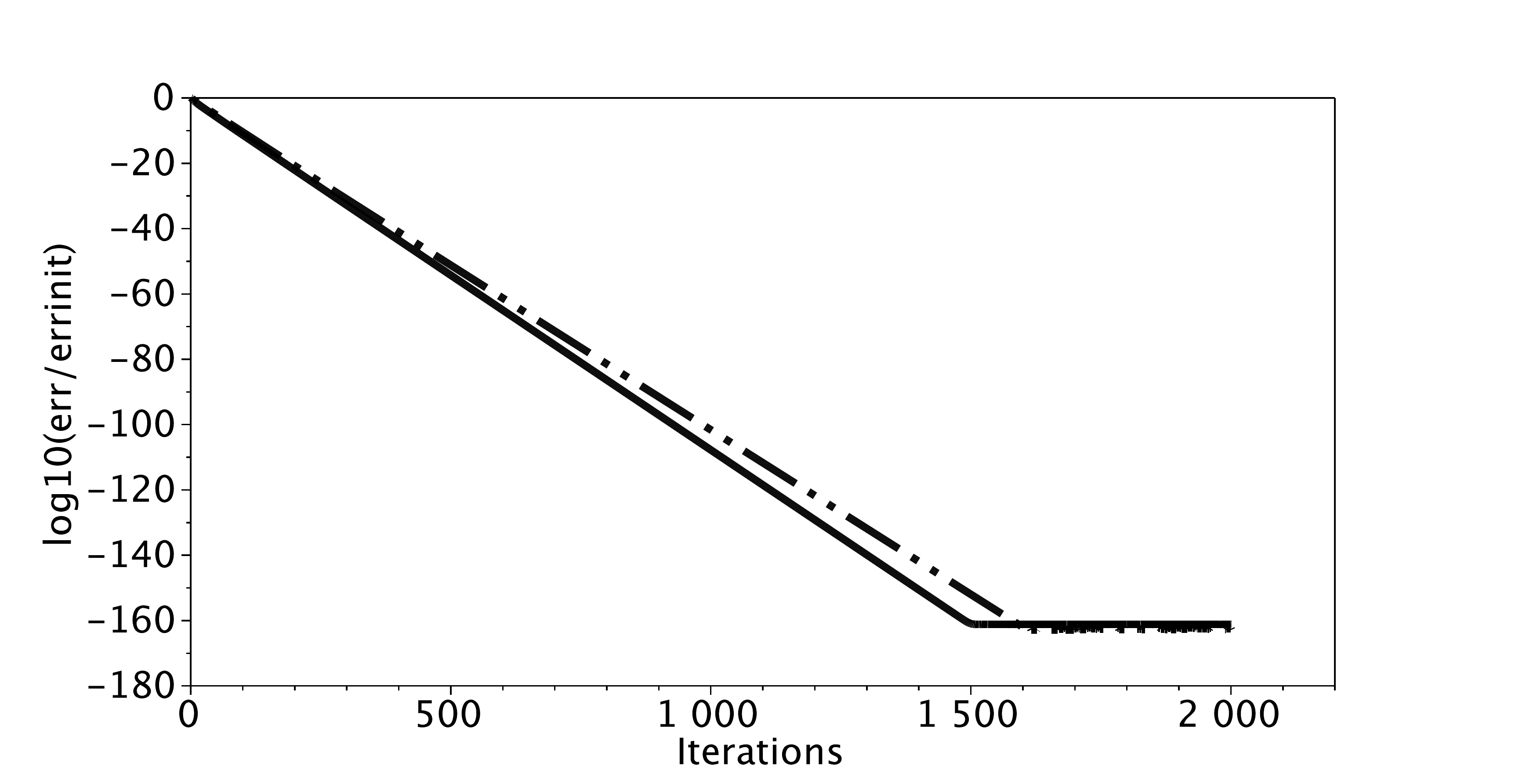}
\caption{Numerical convergence of the complete
    communications method  for $4\times1$  (solid)
    and $2\times2$ (dashed-dotted) subdomains}
\label{fig:CompleteCommunicationsNumericalSimulation}
\end{figure}

We do the same experiment for the complete communication method  as we did
for the auxiliary variable
in~\S\ref{subsect:NumericalObservationsAuxiliaryVariables}.
The results  are shown in
Figure~\ref{fig:CompleteCommunicationsNumericalSimulation}. 
As expected, for the complete communication method,
convergence is also observed up to \texttt{minreal} for the $2\times2$
subdomain cases, \textit{i.e.}, when there are crosspoints. 
In practice, when using complete communication methods, the Robin
parameters should be different at cross-points,
see~\cite{Gander.Kwok:2012:BestRobinParameters} for full details. In
this paper, we chose not to do so and use the same $p$ at
cross-points. 

\section{Conclusion}\label{sect:Conclusions}

This paper contains two concrete propositions on how to discretize
Neumann conditions at cross points in domain decomposition methods:
the auxiliary variable method and complete communication. We showed
three new results: first that the introduction of auxiliary variables
makes it possible to prove convergence of the discretized methods for
very general decompositions, including cross points, using energy
estimates. Second that Neumann conditions can be split at cross points
in a way minimizing artificial oscillation in the domain
decomposition, and third, in the Appendix, that lumping the mass matrix in a finite
element discretized optimized Schwarz method leads to better
performance. We explained this by a reinterpretation at the continuous
level, which shows a tangential higher order operator appearing.  Its
weight can even be optimized using the new concept of overlumping, and
this can be done purely at the algebraic level, without need to
discretize a complicated higher order operator.

We have restricted ourselves to two spatial dimensions. In higher
dimensions, in addition to cross-points, there would also be
cross-edges. Both the auxiliary variables method and complete
communication can be adapted to higher dimensions, which is work in
progress.

\appendix
\section{(Over)lumping of the Interface mass matrix}\label{sect:Lumping}

We start with a numerical experiment, using the consistent interface
mass matrix $B_i$ from~\eqref{eq:UnlumpedBoundaryMatrix} and the
lumped interface mass matrix $B_i^{\textrm{lump}}$
from~\eqref{eq:LumpedBoundaryMatrix} in the Robin transmission
condition of the OSM. We solve the Poisson equation with right hand
side $f(x,y)=2(y(4.0-y)+x(4.0-x))$ on the square domain
$\Omega=(0,4)^2$ with $3\times 3$ subdomains of equal size, and Robin
parameter $p=2.0$, discretized using $Q_1$ finite elements with mesh
size $h=1/15$. Figure~\ref{fig:CompLumpedNotLumped} shows how the
error decreases as a function of the iteration index in the OSM for
these two choices.
\begin{figure}
  \centering
  \includegraphics[width=0.8\textwidth]{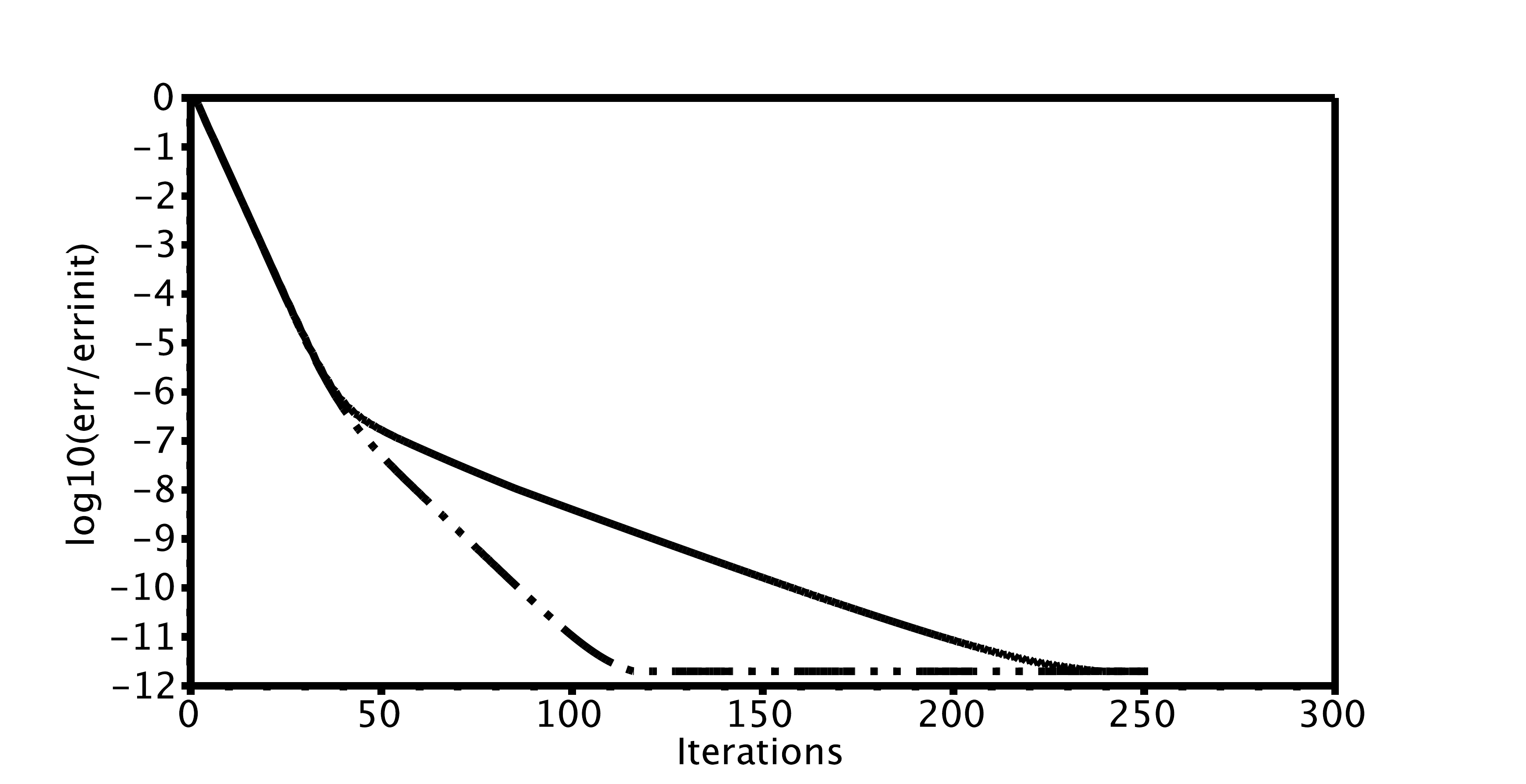}
  \caption{Convergence with lumped Robin(dashed-dotted) and consistent Robin(solid)
    }
  \label{fig:CompLumpedNotLumped}
\end{figure}
We see that initially the two methods converge at the same rate, but
around iteration $40$, the method using the consistent mass interface
matrix slows down. We show in Figure~\ref{fig:WhyLumpIsNecessary}
snapshots of the error distribution for selected iteration indices.
\begin{figure}
  \centering
  \includegraphics[width=0.3\textwidth]{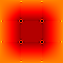}\quad 
  \includegraphics[width=0.3\textwidth]{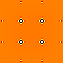}\quad 
  \includegraphics[width=0.3\textwidth]{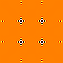}\\[1em]
  \includegraphics[width=0.3\textwidth]{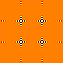}\quad 
  \includegraphics[width=0.3\textwidth]{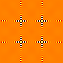}\quad 
  \includegraphics[width=0.3\textwidth]{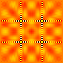}
  \caption{Scaled error distribution at iteration $35$, $50$, $75$,
  $100$, $150$ and $200$ for OSM with consistent interface mass matrix
  using auxiliary variables at cross-points.}
  \label{fig:WhyLumpIsNecessary}
\end{figure}
We see that a highly oscillatory mode appears in the error along the
interfaces.  Snapshots of the error distribution using the lumped mass
matrix $B_i^{\textrm{lump}}$ are shown in
Figure~\ref{fig:FalseColorLumpErrorRepresentation} for the same
experiment setting.
\begin{figure}
  \centering
  \includegraphics[width=0.3\textwidth]{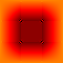}\quad
  \includegraphics[width=0.3\textwidth]{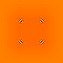}\quad
  \includegraphics[width=0.3\textwidth]{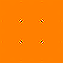}\\[1em]
  \includegraphics[width=0.3\textwidth]{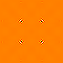}\quad
  \includegraphics[width=0.3\textwidth]{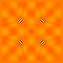}\quad
  \includegraphics[width=0.3\textwidth]{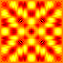}
  \caption{Scaled error distribution at iteration $35$, $50$, $65$,
    $80$, $95$, $110$ for OSM with lumped interface mass matrix using
    auxiliary variables at cross-points.}
  \label{fig:FalseColorLumpErrorRepresentation}
\end{figure}
We see that with the lumped mass matrix, the high frequency error mode
along the interface is much less pronounced, and convergence is
faster.

In order to understand this phenomenon, we reinterpret the effect
of mass lumping at the continuous level: the difference
\begin{equation*}
B^\mathrm{\textrm{lump}}_{i;j,j'} - B_{i;j,j'}
=\begin{cases}
\frac{p}{6}\sum_{j''}\lvert\bm{x}_{i;j}-\bm{x}_{i;j''}\rvert
&\text{if $j'=j$ and $\bm{x}_{i;j}$ lies on $\partial\Omega_i$},\\
-\frac{p}{6}\lvert\bm{x}_{i;j}-\bm{x}_{i;j'}\rvert&\text{if
  $\lbrack\bm{x}_{i;j}\bm{x}_{i;j'}\rbrack$ is an edge of  $\partial\Omega_i$},\\
0&\text{otherwise},
\end{cases}
\end{equation*}
looks like the discretization of a negative, one-dimensional
Laplacian. This holds technically only if the step size $h$ is constant
and we are not at a cross-point. In that case, the lumped matrix
actually discretizes the higher order transmission condition
\begin{equation*}
\frac{\partial u}{\partial\bm{n}_i}+\frac{ph^2}{6}\frac{\partial^2
  u}{\partial^2 \bm{\tau}}+pu.
\end{equation*}
If we could modify the value of $ph^2$, we would obtain a truly
optimizable higher order, or Ventcell, transmission condition.  This
motivates the idea of overlumping: introducing a relaxation parameter
$\omega$, we define
\begin{equation}
  B^{\omega}_{i;j,j'}:=(1-\omega)B_{i;j,j'}+\omega
  B^{\mathrm{lump}}_{i;j,j'},
\end{equation}
and thus obtain a discretization of the transmission condition
\begin{equation}\label{eq:OvLumpSecondOrder}
  \frac{\partial u}{\partial\bm{n}_i}
    +\omega\frac{ph^2}{6}\frac{\partial^2 u}{\partial^2 \bm{\tau}}+pu.
\end{equation}

We perform now a numerical experiment with this overlumped mass
matrix. For a rectangular domain $\Omega=(0,4)\times(0,2)$ with two
square subdomains $\Omega_1=(0,2)\times(0,2)$ and
$\Omega_2=(2,4)\times(0,2)$, we run the OSM on Laplace's equation
discretized with $Q_1$ finite elements and homogeneous boundary
conditions, thus simulating directly the error equations.  We start
with a random initial guess on the interface $\{2\}\times(0,2)$. We
apply $50$ Optimized Schwarz iterations.  We do this for $10\times10$,
$20\times20$, $50\times50$ and $100\times100$ cells per subdomains,
with the Robin parameter $p$ going from $1$ to $20$ with increment of
$0.5$ and the lump parameter $\omega$ going from $0$ to $100$ with
increment of $0.25$. 
We give the optimal $p$ and
$\omega$ in Table~\ref{table:OptimalFactorsOverlump}.
\begin{table}
\begin{tabular}{p{1.8cm}|p{3cm}|p{3cm}|p{3.5cm}}
Cells in $\Omega_i$&Consistent&Lumped&Best\\
\hline
$10\times10$  & 
    $\omega=0.0$,  $p=6.0$,  $\kappa= 0.5791628$  &
     $\omega=1.0$,    $p=3.5$,    $\kappa=0.3887587$  &
     $\omega=10.25$,    $p=1.5$,    $\kappa=0.1245496$ \\
\hline
$20\times20$  &
     $\omega=0.0$,    $p=8.5$,    $\kappa= 0.6853493$  &
      $\omega=1.0$,   $p=5.0$,    $\kappa=0.5222360$  &
      $\omega=17.75$,   $p=2.0$,     $\kappa=0.1852617$   \\
\hline
$50\times50$    &  
    $\omega=0.0$, $p=14.0$, $\kappa=0.7847913$ &
    $\omega=1.0$, $p=8.0$, $\kappa=0.6643391$ &
    $\omega=45.0$, $p=2.5$, $\kappa=0.2863597$ \\
\hline
$100\times100$  & 
$\omega=0.0$,    $p=22.5$,   $\kappa=0.8141025$ &
$\omega=1.0$,    $p=12.0$,    $\kappa=0.7332624$ &
$\omega=89.25$,    $p=3.0$,  $\kappa=0.3571062$
\end{tabular}
\caption{Optimal Robin parameter $p$ and overlump factor $\omega$ with
  corresponding numerical convergence factor $\kappa=\exp(\log(\lVert
  u_{50}\rVert_\infty/\lVert u_{0}\rVert_\infty)/50)$ and $2$ subdomains.}
\label{table:OptimalFactorsOverlump}
\end{table}
Using the asymptotic results from~\cite{Gander:2006:OSM}, the optimal
asymptotic choice of $p$ for the consistent mass interface matrix
should behave like $p=O(1/h^{1/2})$, and in the emulated Ventcell case
from overlumping, we should have $p=O(1/h^{1/4})$ and
$\omega=O(1/h)$, which is well what we observe.

We perform now a new numerical experiment with this overlumped mass
matrix but in the presence of a single cross-point. For this experiment, we
use the auxiliary variable method, see
Table~\ref{table:OptimalFactor2x2AV}, 
and complete communication\footnote{Using
$\mathbf{A}_{\mathcal{D}}$ and $\mathbf{A}_{\mathcal{N}}$ 
of~\S\ref{subsect:SplittingNeumann}
and~\S\ref{subsect:SplittingNeumann}}, see
Table~\ref{table:OptimalFactor2x2CC}. For a square domain $\Omega=(0,4)\times(0,4)$ with four
square subdomains $\Omega_1=(0,2)\times(0,2)$ and
$\Omega_2=(2,4)\times(0,2)$,  $\Omega_3=(0,2)\times(2,4)$ and
$\Omega_4=(2,4)\times(2,4)$, we run the OSM on Laplace's equation
discretized with $Q_1$ finite elements and homogeneous boundary
conditions, thus simulating directly the error equations.  We start
with a random initial guess on the interface $\{2\}\times(0,4)\cup(0,4)\times\{2\}$. We
apply $50$ optimized Schwarz iterations.  We do this for $10\times10$,
$20\times20$, $50\times50$ and $100\times100$ cells per subdomains.
We started with the Robin parameter $p$ going from $1$ to $20$ with increment of
$0.5$ and the lump parameter $\omega$ going from $0$ to $100$ with
increment of $0.25$. For the $100\times100$ cells per subdomain with
consistent Robin conditions case, we extended the search for the Robin
parameter up to $24.5$. For the best (overlumping) case, $2\times2$
subdomains and 
$10\times10$ cells per subdomain, we extended the search for the
optimal $p$ to the interval $\lbrack0.1,1\rbrack$ with  increment of $0.1$.
\begin{table}
\begin{tabular}{p{1.8cm}|p{3cm}|p{3cm}|p{3.5cm}}
Cells in $\Omega_i$&Consistent&Lumped&Best\\
\hline
$10\times10$  & 
    $\omega=0.0$,  $p=3.5$,  $\kappa=0.7468911$  &
     $\omega=1.0$,    $p=2.0$,    $\kappa=0.6833862$  &
     $\omega=17.25$,    $p=0.8$,    $\kappa=0.4862979$ \\
\hline
$20\times20$  &
     $\omega=0.0$,    $p=5.0$,    $\kappa=0.8073780$  &
      $\omega=1.0$,   $p=3.0$,    $\kappa=0.7053783$  &
    $\omega=14.75$, $p=1.5$, $\kappa=0.5045374$ \\
\hline
$50\times50$    &  
    $\omega=0.0$, $p=8.0$, $\kappa=0.8775996$ &
    $\omega=1.0$, $p=4.5$, $\kappa=0.8032485$ &
     $\omega=82.0$,   $p=1.5$,     $\kappa=0.5001431$   \\
\hline
$100\times100$  & 
$\omega=0.0$,    $p=11.0$,   $\kappa=0.9102802$ &
$\omega=1.0$,    $p=6.5$,    $\kappa=0.8547884$ &
$\omega=122.5$,    $p=2.0$,  $\kappa=0.6013464$     
\end{tabular}
\caption{Optimal Robin parameter $p$ and overlump factor $\omega$ with
  corresponding numerical convergence factor $\kappa=\exp(\log(\lVert
  u_{60}\rVert_\infty/\lVert u_{30}\rVert_\infty)/30)$ for $2\times2$
  subdomains using auxiliary variable method.}
\label{table:OptimalFactor2x2AV}
\end{table}

\begin{table}
\begin{tabular}{p{1.8cm}|p{3cm}|p{3cm}|p{3.5cm}}
Cells in $\Omega_i$&Consistent&Lumped&Best\\
\hline
$10\times10$  & 
    $\omega=0.0$,  $p=3.5$,  $\kappa=0.7553129$  &
     $\omega=1.0$,    $p=2.0$,    $\kappa=0.6967638$  &
     $\omega=17.75$,    $p=1.0$,    $\kappa=0.3989268$ \\
\hline
$20\times20$  &
     $\omega=0.0$,    $p=5.0$,    $\kappa=0.8134911$  &
      $\omega=1.0$,   $p=3.0$,    $\kappa=0.7082014$  &
      $\omega=15.0$,   $p=1.5$,     $\kappa= 0.4997952$   \\
\hline
$50\times50$    &  
    $\omega=0.0$, $p=8.0$, $\kappa= 0.8778605$ &
    $\omega=1.0$, $p=4.5$, $\kappa= 0.8034476$ &
    $\omega=86.0$, $p=1.5$, $\kappa=0.5141311$ \\
\hline
$100\times100$  &
$\omega=0.0$,    $p=11.0$,   $\kappa=0.9106798$ &
$\omega=1.0$,    $p=6.5$,    $\kappa=0.8528811$ &
$\omega=122.0$,    $p=2.0$, $\kappa=0.6006753$.
\end{tabular}
\caption{Optimal Robin parameter $p$ and overlump factor $\omega$ with
  corresponding numerical convergence factor $\kappa=\exp(\log(\lVert
  u_{60}\rVert_\infty/\lVert u_{30}\rVert_\infty)/30)$ for $2\times2$
  subdomains using complete communication method. Same $p$ at
  cross-point as on edge.}
\label{table:OptimalFactor2x2CC}
\end{table}

\section{A simple lemma on connected graphs}
\begin{lemma}\label{lemma:graphes}
Let $\mathcal{G}$ be a connected graph. Let $V(G)$ be its set of
vertices and $E(G)$ be its set of edges. Let $\phi$ be a function
from $V(G)$ to $\mathbb{R}$ such that $\sum_{v\in V(G)}\phi(v)=0$.
Let 
\begin{equation*}
E_f(G)=\{(v_1,v_2)\in V(G)\times V(G) \text{ s.t. }
\{v_1,v_2\}\in E(G)\}.
\end{equation*}
Then, there exists 
\begin{equation*}
\psi: E_f(G)\to\mathbb{R}
\end{equation*}
such that
\begin{align*}
\psi(v_1,v_2&)=-\psi(v_2,v_1)\text{ for all $(v_1,v_2)$ in $E_f(G)$},\\
\phi(v_1)&=\sum_{v_2\text{ s.t. $(v_1,v_2)$ in $E_f(G)$}} \psi(v_1,v_2).
\end{align*}
\end{lemma}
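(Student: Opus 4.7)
The plan is to prove the statement by induction on $|V(G)|$, viewing it as the assertion that any zero-sum vertex function on a connected graph can be realized as the divergence of an antisymmetric edge flow. The base case $|V(G)|=1$ is immediate: the hypothesis $\sum_{v}\phi(v)=0$ forces $\phi(v)=0$, and the sum over the empty neighborhood of $v$ vanishes, so the required identity holds vacuously.

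For the inductive step, assume the result for all connected graphs on fewer than $n$ vertices, and let $G$ be connected with $|V(G)|=n$. I would choose a spanning tree $T$ of $G$, pick a leaf $v$ of $T$, and let $u$ be its unique $T$-neighbor. The key move is to dump all of $\phi(v)$ onto the tree edge at $v$: set $\psi(v,u):=\phi(v)$, $\psi(u,v):=-\phi(v)$, and $\psi(v,w):=\psi(w,v):=0$ for every other edge $\{v,w\}\in E(G)$ (which necessarily is not in $T$). This already satisfies $\phi(v)=\sum_{w\sim v}\psi(v,w)$. I then remove $v$ from $G$ to obtain $G':=G\setminus\{v\}$, which is still connected because $T\setminus\{v\}$ is a spanning tree of $G'$, and transfer the load by defining $\phi'(u):=\phi(u)+\phi(v)$ and $\phi'(w):=\phi(w)$ for $w\neq u,v$. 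Since $\sum_{w\in V(G')}\phi'(w)=\sum_{w\in V(G)}\phi(w)=0$, the induction hypothesis yields an antisymmetric $\psi'$ on $E_f(G')$ realizing $\phi'$. Gluing the values already defined at $v$ with $\psi'$ on $E_f(G')$ produces a candidate $\psi$ on $E_f(G)$.

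It remains to verify the balance at every vertex of $G'$. At $u$, a short calculation gives
\begin{equation*}
\sum_{w:\,(u,w)\in E_f(G)}\psi(u,w)=\psi(u,v)+\sum_{w:\,(u,w)\in E_f(G')}\psi'(u,w)=-\phi(v)+\phi'(u)=\phi(u).
\end{equation*}
At any other $w\in V(G')$, the only edges present in $G$ but absent in $G'$ are those joining $w$ to $v$, and by construction those carry $\psi(w,v)=0$, so the sum reduces to $\phi'(w)=\phi(w)$. Antisymmetry is preserved by construction.

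The main conceptual point, and the only step requiring care, is choosing $v$ to be a leaf of a spanning tree rather than a leaf of $G$ itself: a vertex may have many neighbors in $G$ yet still be a leaf of $T$, and this flexibility is what lets us zero out all non-tree edges at $v$ and then delete $v$ without breaking connectivity, keeping the induction clean. As a remark, a shorter non-inductive proof is also available: the graph Laplacian $L$ of a connected graph has $\Ker L=\mathbb{R}\mathbf{1}$ and $\mathrm{range}(L)=\mathbf{1}^{\bot}$, so $\phi\in\mathrm{range}(L)$; solving $Lu=\phi$ and setting $\psi(v_1,v_2):=u(v_1)-u(v_2)$ yields the result directly. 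I prefer the inductive argument here because it is self-contained and matches the combinatorial flavor of the splitting problem it is applied to.
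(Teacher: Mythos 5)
Your proof is correct and follows essentially the same inductive argument as the paper: remove a non-cut vertex $v$, assign all of $\phi(v)$ to one incident edge, zero the others, shift $\phi(v)$ onto the chosen neighbor, and invoke the induction hypothesis on the connected remainder. The only difference is cosmetic --- you justify the existence of such a $v$ explicitly via a leaf of a spanning tree, whereas the paper simply cites this fact as well known --- and your closing remark about solving $Lu=\phi$ with the graph Laplacian is a valid alternative route, though not the one taken here.
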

\begin{proof}
By recurrence over the number of vertices. The lemma is trivially true
when the number of vertices is $1$. Suppose the lemma is  true when
the number of vertices is $n$ with $n\geq 1$. Let $G$ be a connected graph with
$n+1$ vertices. It is well known that there exists a vertex $v$ such
that $G-\{v\}$ remains connected. Since $G$ is connected, there are
edges of $G$ originating from $v$. Choose $w_0$ adjacent to $v$. Set
$\psi(v,w_0):=\phi(v)$, $\psi(w_0,v):=-\phi(v)$ and
$\psi(v,w):=\psi(w,v):=0$ for all other vertices $w$ adjacent to $v$. Set
\begin{equation*}
\begin{split}
\hat{\phi}:V(G)\setminus \{v\}&\to\mathbb{R}\\
w&\mapsto\begin{cases}
\phi(w)&\text{if $w$ not adjacent to $v$},\\
\phi(w)-\psi(w,v)&\text{if $w$ adjacent to $v$}.
\end{cases}
\end{split}
\end{equation*}
We have $\sum_w\hat{\phi}(w)=\sum_w\phi(w)=0$.
We apply the lemma on $\hat{\phi}$ and $G-\{v\}$ which is connected and get the
remaining values of $\psi$.
\end{proof}

\section*{Acknowledgements}
This study has been carried out with financial support from the French
State, managed by the French National Research Agency (ANR) in the 
frame of the "Investments for the future" Programme IdEx Bordeaux -
CPU (ANR-10-IDEX-03-02).

\bibliographystyle{plain}
\bibliography{GanderSantugini_CrosspointsDDMFEM.bib}
\end{document}